\theoremstyle{plain}                    
\newtheorem{thm}{Theorem}[section]
\theoremstyle{definition}               
\theoremstyle{definition}
\newtheorem{example}{Example}
\theoremstyle{remark}
\newtheorem{rmk}[thm]{Remark}
\newenvironment{acknowledgments}{{\flushleft \bf Acknowledgment:}}{}
\numberwithin{equation}{section}
\numberwithin{figure}{section}
\numberwithin{table}{section}
\numberwithin{equation}{section}
\numberwithin{figure}{section}
\numberwithin{table}{section}
\title{A Finite-Volume Method for Nonlinear Nonlocal Equations with a Gradient Flow Structure}
\author{Jos\'e A. Carrillo\thanks{Department of Mathematics, Imperial College London, London SW7 2AZ, UK; {\tt carrillo@imperial.ac.uk}},
{} Alina Chertock\thanks{Department of Mathematics, North Carolina State University, Raleigh, NC 27695, USA;
{\tt chertock@math.ncsu.edu}},
~{}and Yanghong Huang\thanks{Department of Mathematics, Imperial College London, London SW7 2AZ, UK; {\tt 
yanghong.huang@imperial.ac.uk}}}
\date\today
\begin{document}

\maketitle

\begin{abstract}
We propose a positivity preserving entropy decreasing finite volume scheme for nonlinear nonlocal equations with a gradient flow structure. These properties allow for accurate computations of stationary states and long-time asymptotics demonstrated by suitably chosen test cases in which these features of the scheme are essential. The proposed scheme is able to cope with non-smooth stationary states, different time scales including metastability, as well as concentrations and self-similar behavior induced by singular nonlocal kernels. We use the scheme to explore properties of these equations beyond their present theoretical knowledge.
\end{abstract}

\section{Introduction}\label{sec1}
In this paper, we consider a finite-volume method for the following problem:
\begin{equation}
\left\{\begin{aligned}
&\rho_t=\nabla\cdot\big[\rho\nabla\big(H'(\rho)+V(\mathbf{x})+
 W\ast\rho\big)\big],
\quad \mathbf{x}\in\mathbb{R}^d,\ t>0,\\
&\rho(\mathbf{x},0)=\rho_0(\mathbf{x}),
\end{aligned}\right.
\label{1.1}
\end{equation}
where $\rho(\mathbf{x},t)\ge0$ is the unknown probability measure, $W(\mathbf{x})$ is an interaction potential, which is assumed to be symmetric, $H(\rho)$ is a density of  internal energy, and $V(\mathbf{x})$ is a confinement potential.

Equations such as \eqref{1.1} appear in various contexts. If $W$ and $V$  vanishes, and 
$H(\rho)=\rho\log \rho-\rho$ or $H(\rho) =\rho^{m}$, it is the classical heat equation or porous medium/fast diffusion equation 
\cite{Vaz}. If mass-conserving,  self-similar solutions of these equations are sought, the quadratic term 
$V(\mathbf{x})=|\mathbf{x}|^2$ is added, leading to new equations in  similarity variables. More generally, $V$ usually appears as a confining potential in Fokker-Planck type equations \cite{CT00,MR1842429}.  
Finally, $W$ is related to the interaction energy, 
and can be as singular as the Newtonian  potential in chemotaxis 
system~\cite{keller1970initiation} or as smooth as $W(\mathbf{x})=|\mathbf{x}|^\alpha$ 
with $\alpha>2$ in granular flow~\cite{MR1471181}.

The free energy associated to equation \eqref{1.1} is given by (see \cite{cmcv-03,cmcv-06,MR1964483}):
\begin{equation}
E(\rho)=\int_{\mathbb{R}^d}H(\rho)\,d\mathbf{x}+
\int_{\mathbb{R}^d}V(\mathbf{x})\rho(\mathbf{x})\,d\mathbf{x}
+\frac{1}{2}\int_{\mathbb{R}^d}\int_{\mathbb{R}^d}W(\mathbf{x}-\mathbf{y})\rho(\mathbf{x})\rho(\mathbf{y})
\,d\mathbf{x}\,d\mathbf{y}\,.
\label{fe}
\end{equation}
This energy functional is the sum of  internal energy, potential energy
and interaction energy, corresponding to the three 
terms on the right-hand side of \eqref{fe}, respectively. A simple computation 
shows that, at least for classical solutions, the time-derivative 
of $E(\rho)$ along solutions  of~\eqref{1.1} is 
\begin{equation}
\frac{d}{dt}E(\rho)=-\int_{\mathbb{R}^d}\rho|\mathbf{u}|^2\,d\mathbf{x}
:=-I(\rho),
\label{ed}
\end{equation}
where
\begin{equation}
\mathbf{u}=-\nabla\xi,\quad \xi:=\frac{\delta E}{\delta \rho}=
H'(\rho)+V(\mathbf{x})+W\ast\rho.
\label{xi}
\end{equation}
The functional $I$ will henceforth be referred to as the entropy dissipation functional. 

The equation~\eqref{1.1} and its associated energy $E(\rho)$ are the subjects
of intensive study during the past fifteen years, see e.g. \cite{MR1451422,MR2401600,MR1964483,cmcv-03} and the references therein.
The general properties of~\eqref{1.1} are investigated in the context of 
interacting gases~\cite{MR1451422,MR1964483,cmcv-03}, and  are common to a wide variety of models, including granular flows 
\cite{MR1471181,BCCP,MR1812737,LT04}, porous medium flows \cite{CT00,MR1842429}, and collective behavior in biology \cite{MR2257718}. The gradient flow structure, in the sense of~\eqref{ed}, is generalized from smooth solutions to measure-valued solutions \cite{MR2401600}. Certain entropy-entropy dissipation inequalities between $E(\rho)$ and  $I(\rho)$ are also recognized to characterize the fine details of the convergence to steady states \cite{CT00,MR1842429,cmcv-03}. 

The steady state of~\eqref{1.1}, if it exists, usually verifies the form
\begin{equation}\label{eq:steadyeq}
\xi = H'(\rho)+V(\mathbf{x})+W\ast\rho = C,\quad  \mbox{on supp } \rho,
\end{equation}
where the constant $C$ could be different on different connected components  of $\mbox{supp } \rho$. In many cases, especially in the presence of the interaction potential $W$, there are multiple steady states, whose explicit forms are available only for particular $W$. Most of studies of these steady states are based on  certain assumptions on the support and the characterizing equation \eqref{eq:steadyeq}.

In this work, we propose a positivity preserving finite-volume method to treat the general nonlocal nonlinear PDE \eqref{1.1}. Moreover, we show the existence of a discrete free energy that is dissipated for the semi-discrete scheme (discrete in space only). A related method was already proposed in \cite{Filbet} for the case of nonlinear degenerate diffusions in any dimension. We generalize this method to cover the nonlocal terms for both 1D and 2D cases in Section \ref{sec2}. In fact, the first order scheme generalizes easily to cover unstructured meshes. However, it is an open problem how to obtain entropy decreasing higher order schemes in this setting in 2D. Let us remark that other numerical methods based on finite element approximations have been proposed in the literature which are positivity preserving and entropy decreasing at the expense of constructing them by an implicit discretization in time but continuous in space, see \cite{BCW}.

Section \ref{sec3} is devoted to numerical experiments, in which the performance of the developed numerical approach is tested. In Section~\ref{sec31}, we conduct the convergence study of stationary states, where the order of accuracy depends on the regularity at free boundaries. We then showcase the performance of this method for finding stable stationary states with nonlocal terms and their 
equilibration rate in time for different nonlocal models. In Section \ref{sec33}, we emphasize how this method is useful to explore different open problems in the analysis of these nonlocal nonlinear models such as the Keller-Segel model for  chemotaxis in its different  versions. We continue in Section \ref{sec:attrep} with aggregation equations with repulsive-attractive kernels and address the issue of singular kernels and discontinuous steady states. Finally, in Section \ref{sec34}, we demonstrate the performance of the scheme in a number of 2-D experiments showcasing numerical difficulties and interesting asymptotics. 


\section{Numerical Method}\label{sec2}

In this section, we describe both one- (1-D) and two-dimensional (2-D) finite-volume schemes for \eqref{1.1} and prove their positivity 
preserving and entropy dissipation properties. We also establish error estimates and convergence results for the proposed methods. We start in  \S\ref{sec21} with the 1-D case and then generalize it to the 2-D case in \S\ref{sec22}, both on uniform meshes. The 
extension to higher dimensions and non-uniform structured meshes is straightforward.

\subsection{One-Dimensional Case}\label{sec21}
We begin with the derivation of the 1-D second-order finite-volume method for equation \eqref{1.1}. For simplicity, we divide the 
computational domain into finite-volume cells $C_j=[x_{j-\frac{1}{2}},x_{j+\frac{1}{2}}]$ of a uniform size $\Delta x$ with $x_j=j\Delta x$, $j\in
\{-M,\cdots, M\}$,
and denote by
\begin{equation*}
\overline{\rho}_j(t)=\frac{1}{\Delta x}\int_{C_j}\rho(x,t)\,dx,
\end{equation*}
the computed cell averages of the solution $\rho$, which we assume to be known or approximated at time $t\geq 0$. A semi-discrete finite-volume
scheme is obtained by integrating equation \eqref{1.1} over each cell $C_j$ and is given by the following system of ODEs for 
$\overline{\rho}_j$:
\begin{equation}
\frac{d\overline{\rho}_j(t)}{dt}=-\frac{F_{j+\frac{1}{2}}(t)-F_{j-\frac{1}{2}}(t)}{\Delta x},
\label{a2}
\end{equation}
where the numerical flux $F_{j+\frac{1}{2}}$ approximate the continuous flux $-\rho\xi_x=-\rho(H'(\rho)+V(x)+W\ast\rho)_x$ at cell interface $x_{j+\frac{1}{2}}$ and is constructed next. For simplicity, we will omit the dependence of the computed quantities on $t\geq 0$ in the rest. As in the case of degenerate diffusion equations treated in \cite{Filbet}, we use the upwind numerical fluxes. To this end, we first construct 
piecewise linear polynomials in each cell $C_j$,
\begin{equation}
\widetilde\rho_j(x)=\overline{\rho}_j+(\rho_x)_j(x-x_j),\quad x\in C_j,
\label{rec2}
\end{equation}
and compute the right (``east''), $\rho_j^{\rm E}$, and left (``west''), $\rho_j^{\rm W}$, point values at the cell interfaces 
$x_{j-\frac{1}{2}}$ and $x_{j+\frac{1}{2}}$, respectively:
\begin{equation}
\begin{aligned}
&\rho_j^{\rm E}=\widetilde\rho_j(x_{j+\frac{1}{2}}-0)=\overline\rho_j+\frac{\Delta x}{2}(\rho_x)_j,\\
&\rho_j^{\rm W}=\widetilde\rho_j(x_{j-\frac{1}{2}}+0)=\overline\rho_j-\frac{\Delta x}{2}(\rho_x)_j.
\end{aligned}
\label{rew}
\end{equation}
These values will be second-order accurate provided the numerical derivatives $(\rho_x)_j$ are at least first-order accurate 
approximations of $\rho_x(x,\cdot)$. To ensure that the point values \eqref{rew} are both second-order and nonnegative, the slopes $(\rho_x)_j$ in 
\eqref{rec2} are calculated according to the following adaptive procedure. First, the centered-difference approximations $
(\rho_x)_j=(\overline\rho_{j+1}-\overline\rho_{j-1})/(2\Delta x)$ is used for all $j$. Then, if the reconstructed point values in some cell $C_j$ become negative (i.e., either $\rho_j^{\rm E}<0$ or
$\rho_j^{\rm W}<0$), we recalculate the corresponding slope $(\rho_x)_j$ using a slope limiter, which guarantees that the
reconstructed point values are nonnegative as long as the cell averages $\overline\rho_j$ are nonnegative. In our numerical experiments, we
have used a generalized minmod limiter \cite{LN,NT,Swe,vLeV}:
$$
(\rho_x)_j={\rm minmod}\Big(\theta\,\frac{\overline\rho_{j+1}-\overline\rho_j}{\Delta x},\,
\frac{\overline\rho_{j+1}-\overline\rho_{j-1}}{2\Delta x},\,
\theta\,\frac{\overline\rho_j-\overline\rho_{j-1}}{\Delta x}\Big),
$$
where
$$
{\rm minmod}(z_1,z_2,\ldots):=\left\{\begin{array}{ll}\min(z_1,z_2,\ldots),&\mbox{if}~z_i>0\quad\forall\ i,\\
\max(z_1,z_2,\ldots),&\mbox{if}~z_i<0\quad\forall\ i,\\0,&~\mbox{otherwise},
\end{array}\right.
$$
and the parameter $\theta$ can be used to control the amount of numerical viscosity present in the resulting scheme. In all the numerical 
examples below, $\theta=2$ is used.

Equipped with the piecewise linear reconstruction $\widetilde\rho_j(x)$ and point values $\rho_j^{\rm E},\ \rho_j^{\rm W}$, the upwind 
fluxes in \eqref{a2} are computed as
\begin{equation}
F_{j+\frac{1}{2}}=u_{j+\frac{1}{2}}^+\rho_j^{\rm E}+u_{j+\frac{1}{2}}^-\rho_{j+1}^{\rm W},
\label{nf}
\end{equation}
where the discrete values $u_{j+\frac{1}{2}}$ of the velocities are obtained using the centered-difference approach,
\begin{equation}
u_{j+\frac{1}{2}}=-\frac{\xi_{j+1}-\xi_j}{\Delta x},
\label{vv}
\end{equation}
and the positive and negative parts of $u_{j+\frac{1}{2}}$ are denoted by
\begin{equation}
u_{j+\frac{1}{2}}^+=\max(u_{j+\frac{1}{2}},0),\qquad u_{j+\frac{1}{2}}^-=\min(u_{j+\frac{1}{2}},0).
\label{apm}
\end{equation}
The discrete velocity field $\xi_j$ is calculated by discretizing \eqref{xi}:
\begin{equation}
\xi_j=\Delta x\sum\limits_i W_{j-i}\overline\rho_i+H'(\overline\rho_j)+V_j,
\label{xibar}
\end{equation}
where $W_{j-i}=W(x_j-x_i)$ and $V_j=V(x_j)$. The formula \eqref{xibar} is a second-order approximation of
\begin{equation*}
\sum\limits_i\int_{C_i}W(x_j-s)\widetilde\rho_i(s)\,ds+
H'(\widetilde\rho_j(x_j))+V(x_j).
\end{equation*}
Indeed, the reconstruction \eqref{rec2} yields $H'(\widetilde\rho_j(x_j))=H'(\overline\rho_j)$ and 
\begin{align}
\sum\limits_i\int_{C_i}W(x_j-s)\widetilde\rho_i(s)\,ds
&=\sum\limits_i\overline\rho_i\int_{C_i}W(x_j-s)\,ds
+\sum\limits_i(\rho_x)_i\int_{C_i}W(x_j-s)(s-x_i)\,ds \cr
&=\Delta x\sum\limits_i W_{j-i}\overline\rho_i+\mathcal{O}(\Delta x^2),
\label{tech1}
\end{align}
Here $W_{j-i}$ can be any approximation of the local integral $\frac{1}{\Delta x}
\int_{C_i} W(x_j-s)ds$ with error $O(\Delta x^2)$. If $W$ has a bounded second order derivative near $x_{j-i}$,  $W_{j-i}$ can be chosen to be 
$W(x_{j-i})$ (the middle point rule) or $\big(W(x_{j-i-1/2})+W(x_{j-i+1/2})\big)/2$ (the trapezoidal rule). The integral $\int_{C_i}W(x_j-s)(s-x_i)\,ds$ in the second summation is of $ O(\Delta x^3)$ because of the anti-symmetric factor $s-x_i$, leading to overall error $O(\Delta x^2)$. 

The case with non-smooth or singular interaction potential $W$ has to be treated more carefully.
First,  the last integral $\int_{C_i}W(x_j-s)(s-x_i)\,ds$ in the above formula vanishes as soon as $i=j$ due to the symmetry of $W$ independently of any
possible singularity at $x=x_j$. If $W$ has a locally integrable singularity (usually at the origin), $\frac{1}{\Delta x}\int_{C_i} W(x_j-s)ds$ can still 
be approximated by a higher order quadrature scheme with an error $O(\Delta x^2)$ or smaller. Actually, in the particular case of powers or logarithm 
kernels, it can be explicitly computed. However, the second sum above may 
have a slightly larger error. For instance, if $W(x)\sim |x|^{-\alpha}$ for $0<\alpha<1$, then $\int_{C_i}W(x_j-s)(s-x_i)\,ds \sim O(\Delta x^{2-
\alpha})$ by direct computation when $|i-j|$ is close to zero.   

Finally, the semi-discrete scheme \eqref{a2} is a system of ODEs, which has to be integrated numerically using a stable and accurate ODE  
solver. In all numerical examples reported in next section, the third-order strong preserving Runge-Kutta (SSP-RK) ODE solver 
\cite{GST} is used.

\begin{rmk}\label{rmk1}
The computational bottleneck is the discrete convolution in~\eqref{xibar}. This is a classical problem in scientific computing that can be effectively evaluated using fast convolution algorithms, mainly based Fast Fourier Transforms~\cite{MR2001757}. 
\end{rmk}

\begin{rmk}\label{rmk2}
The second-order finite-volume scheme \eqref{a2}, \eqref{nf}--\eqref{xibar}, reduces to the first-order one if the piecewise constant
reconstruction is used instead of \eqref{rec2}, in which case one has
\begin{equation*}
\widetilde\rho_j(x)=\overline\rho_j,\quad x_j\in C_j,\quad\mbox{and therefore}\quad\rho_j^{\rm E}=\rho_j^{\rm W}=\overline\rho_j,\quad 
\forall j.
\end{equation*}
\end{rmk}

\paragraph{Positivity Preserving.} The resulting scheme preserves positivity of the computed cell averages $\overline{\rho}_j$ as  stated in the 
following theorem. The proof is based on the forward Euler integration of the ODE system \eqref{a2}, but will remain equally valid if the forward 
Euler method were replaced by a higher-order SSP ODE solver~\cite{GST}, whose time step can be expressed as a convex combination of several 
forward Euler steps.

\begin{thm}\label{pos1}
Consider the system \eqref{1.1} with initial data $\rho_0(x)\ge0$ and the semi-discrete finite-volume scheme \eqref{a2}, 
\eqref{nf}--\eqref{xibar} with a positivity preserving piecewise linear reconstruction \eqref{rec2} for $\rho$. Assume that the system of 
ODEs \eqref{a2} is discretized by the forward Euler method. Then, the computed cell averages $\overline{\rho}_j\ge0,\ \forall\ j$, provided that
the following CFL condition is satisfied:
\begin{equation}
\Delta t\le\frac{\Delta x}{2a},\quad\mbox{where}\quad a=\max\limits_j\left\{u_{j+\frac{1}{2}}^+,-u_{j+\frac{1}{2}}^-\right\},
\label{cfl2}
\end{equation}
with $u_{j+\frac{1}{2}}^+$ and $u_{j+\frac{1}{2}}^-$ defined in \eqref{apm}.
\end{thm}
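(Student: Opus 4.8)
The plan is to carry out a single forward Euler step of \eqref{a2} and to rewrite the updated cell average $\overline{\rho}_j^{\,n+1}$ as a linear combination of the reconstructed interface values $\rho_j^{\rm E},\rho_j^{\rm W},\rho_{j-1}^{\rm E},\rho_{j+1}^{\rm W}$ with nonnegative coefficients; positivity then follows by induction on the time level, starting from the nonnegative initial data. Writing $\lambda=\Delta t/\Delta x$, the update is $\overline{\rho}_j^{\,n+1}=\overline{\rho}_j^{\,n}-\lambda\big(F_{j+\frac12}-F_{j-\frac12}\big)$, and the crucial algebraic observation is that the piecewise-linear reconstruction \eqref{rew} satisfies the midpoint identity $\overline{\rho}_j=\tfrac12(\rho_j^{\rm E}+\rho_j^{\rm W})$. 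This lets me split the incoming cell average into two halves, one to be balanced against the outflow through each of the two interfaces of $C_j$.

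Substituting the upwind flux \eqref{nf}, namely $F_{j+\frac12}=u_{j+\frac12}^+\rho_j^{\rm E}+u_{j+\frac12}^-\rho_{j+1}^{\rm W}$ together with the analogous expression for $F_{j-\frac12}$, and using the midpoint identity, I would collect terms to obtain
\begin{equation*}
\overline{\rho}_j^{\,n+1}=\Big(\tfrac12-\lambda u_{j+\frac12}^+\Big)\rho_j^{\rm E}+\Big(\tfrac12+\lambda u_{j-\frac12}^-\Big)\rho_j^{\rm W}+\lambda u_{j-\frac12}^+\,\rho_{j-1}^{\rm E}-\lambda u_{j+\frac12}^-\,\rho_{j+1}^{\rm W}.
\end{equation*}
The two neighbor contributions have coefficients $\lambda u_{j-\frac12}^+\ge0$ and $-\lambda u_{j+\frac12}^-\ge0$, which are automatically nonnegative by the definitions \eqref{apm}. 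It therefore remains to control the two local coefficients. Since $u_{j-\frac12}^-\le0$, the second one equals $\tfrac12-\lambda\big(-u_{j-\frac12}^-\big)$, so both local coefficients have the form $\tfrac12-\lambda v$ with $v\in\{u_{j+\frac12}^+,\,-u_{j-\frac12}^-\}$ a nonnegative velocity bounded by $a$. Imposing $\lambda a\le\tfrac12$, which is exactly the CFL condition \eqref{cfl2}, makes both nonnegative.

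With all four coefficients nonnegative and, by the inductive hypothesis, $\rho_j^{\rm E},\rho_j^{\rm W},\rho_{j-1}^{\rm E},\rho_{j+1}^{\rm W}\ge0$ (these being nonnegative because the reconstruction \eqref{rew} is positivity preserving whenever the cell averages $\overline{\rho}_k^{\,n}$ are nonnegative), I conclude $\overline{\rho}_j^{\,n+1}\ge0$ for every $j$, closing the induction. I do not expect a genuine analytic obstacle here: the argument is purely algebraic, and its only real content is the bookkeeping that matches the factor $\tfrac12$ coming from the midpoint splitting against the factor $2a$ in the CFL bound, the upwinding in \eqref{nf} being precisely what guarantees the neighbor coefficients carry the correct sign. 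The extension to higher-order SSP--RK solvers noted before the theorem is then immediate, since each such stage is a convex combination of forward Euler steps and a convex combination of nonnegative quantities is nonnegative.
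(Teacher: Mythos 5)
Your proof is correct and follows essentially the same route as the paper's: the same midpoint identity $\overline{\rho}_j=\tfrac12(\rho_j^{\rm E}+\rho_j^{\rm W})$, the same rewriting of the forward Euler update as a linear combination of the four reconstructed interface values, and the same verification that the CFL condition \eqref{cfl2} makes all coefficients nonnegative. Your explicit induction on time levels and the closing remark about SSP--RK stages being convex combinations of Euler steps merely spell out what the paper states before and within the theorem.
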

\begin{proof}
Assume that at a given time $t$ the computed solution is known and positive: $\overline{\rho}_j\ge0,\ \forall j$. Then the
new cell averages are obtained from the forward Euler discretization of equation \eqref{a2}:
\begin{equation}
\overline{\rho}_j(t+\Delta t)=\overline{\rho}_j(t)-\lambda\left[F_{j+\frac{1}{2}}(t)-F_{j-\frac{1}{2}}(t)\right],
\label{euler}
\end{equation}
where $\lambda:={\Delta t}/{\Delta x}$. As above, the dependence of all terms on the RHS of \eqref{euler} on $t$ is suppressed in the following 
to simplify the notation. Using \eqref{nf} and the fact that $\overline{\rho}_j=\frac{1}{2}\left(\rho_j^{\rm E}+\rho_j^{\rm W}\right)$ (see
\eqref{rew}), we obtain
\begin{equation}
\begin{aligned}
\overline{\rho}_j(t+\Delta t)&=\frac{1}{2}\left(\rho_j^{\rm E}+\rho_j^{\rm W}\right)
-\lambda\left[u_{j+\frac{1}{2}}^+\rho_{j}^{\rm E}+u_{j+\frac{1}{2}}^-\rho_{j+1}^{\rm W}-
u_{j-\frac{1}{2}}^+\rho_{j-1}^{\rm E}-u_{j-\frac{1}{2}}^-\rho_j^{\rm W}\right]\\
&=\lambda u_{j-\frac{1}{2}}^+\rho_{j-1}^{\rm E}+\left(\frac{1}{2}-\lambda u_{j+\frac{1}{2}}^+\right)\rho_{j}^{\rm E}
+\left(\frac{1}{2}+\lambda u_{j-\frac{1}{2}}^-\right)\rho_{j}^{\rm W}-\lambda u_{j+\frac{1}{2}}^-\rho_{j+1}^{\rm W}.
\end{aligned}
\label{euler1}
\end{equation}
It follows from \eqref{euler1} that the new cell averages $\overline{\rho}_j(t+\Delta t)$ are linear combinations of the nonnegative 
reconstructed point values $\rho_{j-1}^{\rm E},\rho_j^{\rm E},\rho_{j}^{\rm W}$ and $\rho_{j+1}^{\rm W}$. Since $u_{j-\frac{1}{2}}^+\ge0$
and $u_{j+\frac{1}{2}}^-\le0$, we conclude that $\overline{\rho}_j(t+\Delta t)\ge0,\ \forall j$, provided that the CFL condition \eqref{cfl2} is 
satisfied.
\end{proof}

\begin{rmk}
Similar result holds for the first-order finite-volume scheme with the CFL condition reduced to
$$
\Delta t\le\frac{\Delta x}{2\max\limits_j\left(u_{j+\frac{1}{2}}^+-u_{j-\frac{1}{2}}^-\right)}.
$$
\end{rmk}

\paragraph{Discrete Entropy Dissipation.} A discrete version of the entropy $E$ defined in \eqref{fe} is given by
\begin{equation}
E_\Delta(t)=\Delta x\sum\limits_j\left[\frac12\Delta x\sum\limits_i W_{j-i}\overline{\rho}_i\overline{\rho}_j+H(\overline\rho_j)+
{V}_j\overline{\rho}_j\right].
\label{dfe}
\end{equation}
We also introduce the discrete version of the entropy dissipation
\begin{equation}
I_\Delta(t)=\Delta x\sum\limits_j (u_{j+\frac{1}{2}})^2\min\limits_j(\rho_j^{\rm E},\rho_{j+1}^{\rm W}).
\label{ded}
\end{equation}
In the following theorem, we prove that the time derivative of $E_\Delta(t)$ is less or equal than the negative of $I_\Delta(t)$, mimicking the corresponding property of the continuous relation.

\begin{thm}\label{dd1}
Consider the system \eqref{1.1} with no flux boundary conditions on $[-L,L]$ with $L>0$ and with initial data $\rho_0(x)\ge0$. Given the semi-discrete finite-volume scheme \eqref{a2} with $\Delta x=L/M$,
\eqref{nf}--\eqref{xibar} with a positivity preserving piecewise linear reconstruction \eqref{rec2} for $\rho$ and discrete boundary conditions 
$F_{M+\frac{1}{2}}=F_{-M-\frac{1}{2}}=0$. Then,
\begin{equation*}
\frac{d}{dt}E_\Delta(t)\le-I_\Delta(t),\quad \forall t>0.
\end{equation*}
\end{thm}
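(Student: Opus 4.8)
The plan is to exploit the discrete gradient-flow structure of the scheme: I expect the quantity $\Delta x\,\xi_j$ from \eqref{xibar} to play the role of a discrete variational derivative $\partial E_\Delta/\partial\overline\rho_j$, so that differentiating $E_\Delta$ in time reproduces, after a discrete summation by parts, exactly a sum of products of discrete velocities and fluxes, mirroring the continuous identity \eqref{ed}.

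First I would differentiate $E_\Delta(t)$ in \eqref{dfe} with respect to $t$ through its dependence on the cell averages $\overline\rho_j(t)$, i.e. compute $\frac{d}{dt}E_\Delta=\sum_j\frac{\partial E_\Delta}{\partial\overline\rho_j}\,\dot{\overline\rho}_j$. The key algebraic check is that $\frac{\partial E_\Delta}{\partial\overline\rho_j}=\Delta x\,\xi_j$: the internal-energy term contributes $\Delta x\,H'(\overline\rho_j)$, the confinement term contributes $\Delta x\,V_j$, and the double sum contributes $(\Delta x)^2\sum_i W_{j-i}\overline\rho_i$, where the symmetry $W_{j-i}=W_{i-j}$ (inherited from the symmetry of $W$) makes the two partial derivatives of the quadratic form coincide and thereby cancels the factor $\tfrac12$. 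Substituting the scheme \eqref{a2} then gives $\frac{d}{dt}E_\Delta=-\sum_j\xi_j\big(F_{j+\frac12}-F_{j-\frac12}\big)$.

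Next I would perform a discrete summation by parts, relabeling the index in the $F_{j-\frac12}$ sum. The no-flux conditions $F_{M+\frac12}=F_{-M-\frac12}=0$ annihilate the boundary contributions, leaving $\frac{d}{dt}E_\Delta=\sum_j(\xi_j-\xi_{j+1})F_{j+\frac12}$ over interior interfaces; by the definition \eqref{vv} of the discrete velocity, $\xi_j-\xi_{j+1}=\Delta x\,u_{j+\frac12}$, so that $\frac{d}{dt}E_\Delta=-\Delta x\sum_j u_{j+\frac12}F_{j+\frac12}$. Inserting the upwind flux \eqref{nf} and using the elementary identities $u\,u^+=(u^+)^2$ and $u\,u^-=(u^-)^2$ that follow from \eqref{apm}, each interface term becomes $u_{j+\frac12}F_{j+\frac12}=(u_{j+\frac12}^+)^2\rho_j^{\rm E}+(u_{j+\frac12}^-)^2\rho_{j+1}^{\rm W}$.

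The final and only genuinely \emph{inequality}-producing step is to bound this from below by $I_\Delta$. Since at each interface exactly one of $u_{j+\frac12}^\pm$ is nonzero, the term equals $(u_{j+\frac12})^2$ times whichever reconstructed value is upwinded; both reconstructed values are nonnegative by the positivity-preserving reconstruction (the hypotheses of Theorem \ref{pos1}), so the upwinded value is $\ge\min(\rho_j^{\rm E},\rho_{j+1}^{\rm W})$, giving $u_{j+\frac12}F_{j+\frac12}\ge(u_{j+\frac12})^2\min(\rho_j^{\rm E},\rho_{j+1}^{\rm W})$. Summing over $j$ and comparing with \eqref{ded} yields $\frac{d}{dt}E_\Delta\le-I_\Delta$. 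I expect the main obstacle to be bookkeeping rather than conceptual depth: getting the variational-derivative identity right (correctly tracking the role of the symmetry of $W$ and the factor $\tfrac12$), and carrying the boundary terms through the summation by parts so that the no-flux conditions eliminate them cleanly.
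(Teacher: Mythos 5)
Your proposal is correct and follows essentially the same route as the paper's proof: differentiate $E_\Delta$ using the symmetry of $W$ to identify $\Delta x\,\xi_j$ as the discrete variational derivative, substitute the scheme \eqref{a2}, sum by parts using the no-flux conditions, and bound the upwind-flux terms via $u\,u^{\pm}=(u^{\pm})^2$ together with nonnegativity of the reconstructed values. The only blemish is a dropped minus sign in one intermediate display, which should read $\frac{d}{dt}E_\Delta=-\sum_j\left(\xi_j-\xi_{j+1}\right)F_{j+\frac12}$; this does not propagate, since your subsequent formula $\frac{d}{dt}E_\Delta=-\Delta x\sum_j u_{j+\frac12}F_{j+\frac12}$ and the final inequality are correct.
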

\begin{proof}
We start by differentiating \eqref{dfe} with respect to time to obtain:
\begin{equation*}
\begin{aligned}
\frac{d}{dt}E_\Delta(t)&=\Delta x\sum\limits_j\left[\Delta x\sum\limits_i W_{j-i}\overline{\rho}_i\frac{d\overline{\rho}_j}{dt}
+H'(\overline\rho_j)\frac{d\overline{\rho}_j}{dt}+{V}_j\frac{d\overline{\rho}_j}{dt}\right]\\
&=\Delta x\sum\limits_j\left[\Delta x\sum\limits_i W_{j-i}\overline{\rho}_i+H'(\overline\rho_j)+
{V}_j\right]\frac{d\overline{\rho}_j}{dt}.
\end{aligned}
\end{equation*}
Using the definition \eqref{xibar} and the numerical scheme \eqref{a2}, we have
$$
\frac{d}{dt}E_\Delta(t)=-\Delta x\sum\limits_j\xi_j\,\frac{F_{j+\frac{1}{2}}-F_{j-\frac{1}{2}}}{\Delta x}.
$$
A discrete integration by parts using the no flux discrete boundary conditions along with \eqref{vv} yields
$$
\frac{d}{dt}E_\Delta(t)=-\sum\limits_j(\xi_j-\xi_{j+1})F_{j+\frac{1}{2}}=
-\Delta x\sum\limits_j u_{j+\frac{1}{2}}F_{j+\frac{1}{2}}.
$$
Finally, using the definition of the upwind fluxes \eqref{nf} and formulas \eqref{apm} and \eqref{ded}, we conclude
$$
\frac{d}{dt}E_\Delta(t)=-\Delta x\sum\limits_j u_{j+\frac{1}{2}}
\left[u_{j+\frac{1}{2}}^+\rho_j^{\rm E}+u_{j+\frac{1}{2}}^-\rho_{j+1}^{\rm W}\right]
\le-\Delta x\sum\limits_j(u_{j+\frac{1}{2}})^2\min\limits_j(\rho_j^{\rm E},\rho_{j+1}^{\rm W})=-I_\Delta(t).
$$
\end{proof}


\subsection{Two-Dimensional Case}\label{sec22}
In this subsection, we quickly describe a semi-discrete second-order finite-volume method for the 2-D equation \eqref{1.1}. We explain the main ideas in 2D for the sake of the reader. As already mentioned, the first order scheme generalizes easily to unstructured meshes. However, higher 
order schemes with the desired entropy decreasing property are harder to obtain in this setting for higher dimensions.
We introduce a  Cartesian mesh consisting of the cells $C_{j,k}:=[x_{j-\frac{1}{2}},x_{j+\frac{1}{2}}]\times[y_{k-\frac{1}{2}},y_{k+\frac{1}{2}}]$, which for the
sake of simplicity are assumed to be of the uniform size $\Delta x\Delta y$, that is, 
$x_{j+\frac{1}{2}}-x_{j-\frac{1}{2}}\equiv\Delta x,\ \forall\ j$, and $y_{k+\frac{1}{2}}-y_{k-\frac{1}{2}}\equiv\Delta y,\ \forall\ k$.

A general semi-discrete finite-volume scheme for equation \eqref{1.1} can be written in the following form:
\begin{equation}
\frac{d\overline\rho_{j,k}}{dt}=-\frac{F^x_{j+\frac{1}{2},k}-F^x_{j-\frac{1}{2},k}}{\Delta x}
-\frac{F^y_{j,k+\frac{1}{2}}-F^y_{j,k-\frac{1}{2}}}{\Delta y}.
\label{fv2}
\end{equation}
Here, we define $$\bar\rho_{j,k}(t)\approx\dfrac{1}{\Delta x\Delta y}\iint_{C_{j,k}}\rho(x,y,t)dxdy$$ as the cell averages of the 
computed solution and $F^x_{j+\frac{1}{2},k}$ and $F^y_{j,k+\frac{1}{2}}$ are upwind numerical fluxes in the $x$ and $y$ directions, 
respectively.

As in the 1-D case, to obtain formulae for numerical fluxes, we first compute $\rho_{j,k}^{\rm E},\rho_{j,k}^{\rm W},\rho_{j,k}^{\rm N}$ 
and $\rho_{j,k}^{\rm S}$, which are one-sided point values of the piecewise linear reconstruction
\begin{equation}
\widetilde\rho(x,y)=\overline{\rho}_{j,k}+(\rho_x)_{j,k}(x-x_j)+(\rho_y)_{j,k}(y-y_k),\quad (x,y)\in C_{j,k},
\label{rec22}
\end{equation}
at the cell interfaces $(x_{j+\frac{1}{2}},y_k)$, $(x_{j-\frac{1}{2}},y_k)$, $(x_{j},y_{k+\frac{1}{2}})$, $(x_{j},y_{k-\frac{1}{2}})$, 
respectively. Namely,
\begin{equation}
\begin{aligned}
&\rho_{j,k}^{\rm E}:=\widetilde\rho(x_{j+\frac{1}{2}}-0,y_k)=\overline\rho_{j,k}+\frac{\Delta x}{2}(\rho_x)_{j,k},\quad
\rho_{j,k}^{\rm W}:=\widetilde\rho(x_{j-\frac{1}{2}}+0,y_k)=\overline\rho_{j,k}-\frac{\Delta x}{2}(\rho_x)_{j,k},\\[0.5ex]
&\rho_{j,k}^{\rm N}:=\widetilde\rho(x_{j},y_{k+\frac{1}{2}}-0)=\overline\rho_{j,k}+\frac{\Delta y}{2}(\rho_y)_{j,k},\quad
\rho_{j,k}^{\rm S}:=\widetilde\rho(x_{j},y_{k-\frac{1}{2}}+0)=\overline\rho_{j,k}-\frac{\Delta y}{2}(\rho_y)_{j,k}.
\end{aligned}
\label{pv2}
\end{equation}
To ensure the point values in \eqref{pv2} are both second-order and nonnegative, the slopes in \eqref{rec22} are calculated according to 
the adaptive procedure similarly to the 1-D case. First, the centered-difference approximations,
\begin{equation*}
(\rho_x)_{j,k}=\frac{\overline\rho_{j+1,k}-\overline\rho_{j-1,k}}{2\Delta x}\quad\mbox{and}\quad
(\rho_y)_{j,k}=\frac{\overline\rho_{j,k+1}-\overline\rho_{j,k-1}}{2\Delta y}
\end{equation*}
are used for all $j,k$. Then, if the reconstructed point values in some cell $C_{j,k}$ become negative, we recalculate the corresponding 
slopes $(\rho_x)_{j,k}$ or $(\rho_y)_{j,k}$ using a monotone nonlinear limiter, which guarantees that the reconstructed point values are 
nonnegative as long as the cell averages of $\overline\rho_{j,k}$ are nonnegative for all $j,k$. In our numerical experiments, we have used
the one-parameter family of the generalized minmod limiters with $\theta\in[1,2]$:
\begin{equation*}
\begin{aligned}
(\rho_x)_{j,k}=\textrm{minmod}\left(\theta\frac{\overline\rho_{j,k}-\overline\rho_{j-1,k}}{\Delta x},
\frac{\overline\rho_{j+1,k}-\overline\rho_{j-1,k}}{2\Delta x},\theta\frac{\overline\rho_{j+1,k}-\overline\rho_{j,k}}{\Delta x}\right), \\
(\rho_y)_{j,k} =\textrm{minmod}\left(\theta\frac{\overline\rho_{j,k}-\overline\rho_{j,k-1}}{\Delta y},
\frac{\overline\rho_{j,k+1}-\overline\rho_{j,k-1}}{2\Delta y},\theta\frac{\overline\rho_{j,k+1}-\overline\rho_{j,k}}{\Delta y}\right).
\end{aligned}
\end{equation*}

Given the polynomial reconstruction \eqref{rec22} and its point values \eqref{pv2}, the upwind numerical fluxes in \eqref{fv2} are defined 
as
\begin{equation}
F_{j+\frac{1}{2},k}^x=u_{j+\frac{1}{2},k}^+\rho_{j,k}^{\rm E}+u_{j+\frac{1}{2},k}^-\rho_{j+1,k}^{\rm W},\qquad
F_{j,k+\frac{1}{2}}^y=v_{j,k+\frac{1}{2}}^+\rho_{j,k}^{\rm N}+v_{j,k+\frac{1}{2}}^-\rho_{j,k+1}^{\rm S},
\label{nfx}
\end{equation}
where
\begin{equation*}
u_{j+\frac{1}{2},k}=-\frac{\xi_{j+1,k}-\xi_{j,k}}{\Delta x}, \qquad 
v_{j,k+\frac{1}{2}}=-\frac{\xi_{j,k+1}-\xi_{j,k}}{\Delta y},
\end{equation*}
the  values of $u_{j+\frac{1}{2},k}^{\pm}$ and $v_{j,k+\frac{1}{2}}^{\pm}$ are defined according to \eqref{apm}, and
\begin{equation}
\xi_{j,k}=\Delta x\Delta y\sum\limits_{i,l} W_{j-i,k-l}\overline\rho_{i,l}+H'(\overline\rho_{j,k})+V_{j,k}.
\label{xibar2}
\end{equation}
Here, $W_{j-i,k-l}=W(x_j-x_i,y_k-y_l)$ and $V_{j,k}=V(x_j,y_k)$.

Similarly to the 1-D case, the formula \eqref{xibar2} for $\xi_{j,k}$ is obtained by using the reconstruction formula \eqref{rec22} and 
applying the midpoint quadrature rule to the first integral in
$$
\xi_{j,k}=
\sum\limits_{i,l}\iint_{C_{i,l}}W(x-s,y-r)\widetilde\rho_{i,l}(s,r)\,ds\,dr\\
+H'(\widetilde\rho_{j,k}(x,y))+V(x_j,y_k).
$$
As in the 1-D case, the ODE system \eqref{fv2} is to be integrated numerically by a stable and sufficiently accurate ODE solver such as the third-order SSP-RK ODE solver \cite{GST}.

\begin{rmk}\label{rmk23}
As in the 1-D case, the first-order finite-volume method is obtained by taking
$$
\widetilde\rho_{j,k}(x,y)=\overline\rho_{j,k}\quad{\rm and}\quad
\rho_{j,k}^{\rm E}=\rho_{j,k}^{\rm W}=\rho_{j,k}^{\rm N}=\rho_{j,k}^{\rm S}=\overline\rho_{j,k},\quad\forall j,k.
$$
\end{rmk}

\paragraph{Positivity Preserving.} The resulting 2-D finite-volume scheme will preserve positivity of the computed cell averages $\overline{\rho}
_{j,k},\ \forall j,k$, as long as an SSP ODE solver, whose time steps are convex combinations of forward Euler steps, is used for time 
integration. We omit the proof of the positivity property of the scheme as it follows exactly the lines of Theorem \ref{pos1}. The only difference is 
that in the 2-D case 
$\overline\rho_{j,k}=\tfrac{1}{4}\left(\rho_{j,k}^{\rm E}+\rho_{j,k}^{\rm W}+\rho_{j,k}^{\rm N}+\rho_{j,k}^{\rm S}\right)$, which leads
to a slightly modified CFL condition. We thus have the following theorem.

\begin{thm}\label{pos2}
Consider the system \eqref{1.1} with initial data $\rho_0(x)\ge0$ and the semi-discrete finite-volume scheme \eqref{fv2},
\eqref{nfx}--\eqref{xibar2} with a positivity preserving piecewise linear reconstruction \eqref{rec22} for $\rho$. Assume that the system of
ODEs \eqref{fv2} is discretized by the forward Euler (or a strong stability preserving Runge-Kutta) method. Then, the computed cell 
averages $\overline{\rho}_{j,k}\ge0,\ \forall j,k$, provided the following CFL condition is satisfied:
\begin{equation*}
\Delta t\le\min\left\{\frac{\Delta x}{4a},\frac{\Delta y}{4b}\right\},
\quad a=\max\limits_{j,k}\left\{u_{j+\frac{1}{2},k}^+,-u_{j+\frac{1}{2},k}^-\right\},\quad
b=\max\limits_{j,k}\left\{v_{j,k+\frac{1}{2}}^+,-v_{j,k+\frac{1}{2}}^-\right\},
\end{equation*}
where $u_{j+\frac{1}{2},k}^{\pm}$ and $v_{j,k+\frac{1}{2}}^{\pm}$ are defined according to \eqref{apm}.
\end{thm}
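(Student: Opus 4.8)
The plan is to follow the one-dimensional argument of Theorem \ref{pos1} verbatim, the only new ingredient being the four-point decomposition of the cell average. First I would assume the computed solution is nonnegative at time $t$, i.e. $\overline\rho_{j,k}\ge0$ for all $j,k$, and write one forward Euler step of the ODE system \eqref{fv2} as
\begin{equation*}
\overline\rho_{j,k}(t+\Delta t)=\overline\rho_{j,k}-\lambda_x\big[F^x_{j+\frac12,k}-F^x_{j-\frac12,k}\big]-\lambda_y\big[F^y_{j,k+\frac12}-F^y_{j,k-\frac12}\big],
\end{equation*}
with $\lambda_x:=\Delta t/\Delta x$ and $\lambda_y:=\Delta t/\Delta y$. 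The key substitution is $\overline\rho_{j,k}=\tfrac14(\rho^{\rm E}_{j,k}+\rho^{\rm W}_{j,k}+\rho^{\rm N}_{j,k}+\rho^{\rm S}_{j,k})$, which follows by averaging the four identities in \eqref{pv2}, together with the upwind flux formulas \eqref{nfx}.

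Next I would insert \eqref{nfx} into the Euler step and collect the coefficients of the eight reconstructed point values that appear, namely the four central values $\rho^{\rm E}_{j,k},\rho^{\rm W}_{j,k},\rho^{\rm N}_{j,k},\rho^{\rm S}_{j,k}$ together with the four neighboring values $\rho^{\rm E}_{j-1,k},\rho^{\rm W}_{j+1,k},\rho^{\rm N}_{j,k-1},\rho^{\rm S}_{j,k+1}$. All of these are nonnegative by the positivity-preserving reconstruction \eqref{rec22}. The four neighbor coefficients are $\lambda_x u^+_{j-\frac12,k}$, $-\lambda_x u^-_{j+\frac12,k}$, $\lambda_y v^+_{j,k-\frac12}$ and $-\lambda_y v^-_{j,k+\frac12}$, each nonnegative by the sign conventions \eqref{apm}. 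The four central coefficients are $\tfrac14-\lambda_x u^+_{j+\frac12,k}$, $\tfrac14+\lambda_x u^-_{j-\frac12,k}$, $\tfrac14-\lambda_y v^+_{j,k+\frac12}$ and $\tfrac14+\lambda_y v^-_{j,k-\frac12}$.

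Finally I would impose nonnegativity of the four central coefficients. Requiring $\lambda_x u^+_{j+\frac12,k}\le\tfrac14$ and $-\lambda_x u^-_{j-\frac12,k}\le\tfrac14$ for all $j,k$ gives $\Delta t\le\Delta x/(4a)$ with $a=\max_{j,k}\{u^+_{j+\frac12,k},-u^-_{j+\frac12,k}\}$ after reindexing the interfaces, and the analogous conditions in the $y$-direction give $\Delta t\le\Delta y/(4b)$; taking the minimum yields the stated CFL bound. Under this bound $\overline\rho_{j,k}(t+\Delta t)$ is a nonnegative linear combination of nonnegative quantities, hence nonnegative.

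I do not expect a genuine obstacle here: the argument is a direct transcription of Theorem \ref{pos1}, and the only point requiring care is the bookkeeping of the signs of the eight coefficients so that the factor $\tfrac14$ (rather than $\tfrac12$) is tracked correctly through both flux directions, which is precisely what forces the extra factor of two in the CFL bound relative to the one-dimensional case. The extension from forward Euler to a general strong stability preserving Runge--Kutta solver is immediate, since each such step is a convex combination of forward Euler steps and nonnegativity is preserved under convex combinations.
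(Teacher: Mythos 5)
Your proposal is correct and is precisely the argument the paper intends: the paper omits the proof of Theorem \ref{pos2}, stating that it follows the lines of Theorem \ref{pos1} with the only change being the decomposition $\overline\rho_{j,k}=\tfrac14\left(\rho_{j,k}^{\rm E}+\rho_{j,k}^{\rm W}+\rho_{j,k}^{\rm N}+\rho_{j,k}^{\rm S}\right)$, which is exactly the substitution you make, and your coefficient bookkeeping and the resulting factor $\tfrac14$ in the CFL condition are correct. The SSP Runge--Kutta extension via convex combinations of forward Euler steps also matches the paper's remark.
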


\paragraph{Discrete Entropy Dissipation.} We define the discrete entropy
\begin{equation*}
E_\Delta(t)=\Delta x\Delta y\sum\limits_{j,k}\left[\frac{1}{2}\Delta x\Delta y\sum\limits_{i,l} W_{j-i,k-l}\overline{\rho}_{i,l}\overline{\rho}_{j,k}
+H(\overline\rho_{j,k})+\overline{V}_{j,k}\overline{\rho}_{j,k}\right],
\end{equation*}
and discrete entropy dissipation
\begin{equation*}
I_\Delta(t)=\Delta x\Delta y\sum\limits_{j,k}\left[(u_{j+\frac{1}{2},k})^2+(v_{j,k+\frac{1}{2}})^2\right]\min\limits_{j,k}
\left(\rho_{j,k}^{\rm E},\rho_{j+1,k}^{\rm W},\rho_{j,k}^{\rm N},\rho_{j,k+1}^{\rm S}\right).
\end{equation*}
Similarly to the 1-D case, we can show the following dissipative property of the scheme.

\begin{thm}\label{dd2}
Consider the system \eqref{1.1} with no flux boundary conditions in the domain $[-L,L]^2$ with $L>0$ and with initial data $\rho_0(x)\ge0$. 
Given the semi-discrete finite-volume scheme \eqref{fv2},
\eqref{nfx}--\eqref{xibar2} with a positivity preserving piecewise linear reconstruction \eqref{rec22} for $\rho$, with $\Delta x=L/M$, and with 
discrete no-flux boundary conditions $F^x_{M+\frac{1}{2},k} = {F^x_{-M-\frac{1}{2},k}} = F^y_{j,M+\frac{1}{2}} = { F^y_{j,-M-\frac{1}{2}}}=0$. Then,
\begin{equation*}
\frac{d}{dt}E_\Delta(t)\le-I_\Delta(t),\quad \forall t>0.
\end{equation*}
\end{thm}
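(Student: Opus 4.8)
The plan is to mirror the one-dimensional argument of Theorem~\ref{dd1}, now carrying two flux differences and a four-point minimum through the computation. First I would differentiate the discrete entropy $E_\Delta(t)$ in time and apply the product rule to the double-sum interaction term. As in the 1-D case, the symmetry of $W$ (which gives $W_{j-i,k-l}=W_{i-j,l-k}$) lets me combine the two contributions involving $\dot{\overline\rho}_{i,l}\overline\rho_{j,k}$ and $\overline\rho_{i,l}\dot{\overline\rho}_{j,k}$ into a single term, cancelling the factor $1/2$. After collecting the internal, potential and interaction contributions, the bracket multiplying $d\overline\rho_{j,k}/dt$ is exactly $\xi_{j,k}$ from~\eqref{xibar2}, so that
$$\frac{d}{dt}E_\Delta(t)=\Delta x\Delta y\sum_{j,k}\xi_{j,k}\,\frac{d\overline\rho_{j,k}}{dt}.$$

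Next I would substitute the scheme~\eqref{fv2}, which produces an $x$-flux difference and a $y$-flux difference, and perform a discrete summation by parts in each direction separately. The directional structure of the fluxes together with the no-flux boundary conditions $F^x_{M+\frac12,k}=F^x_{-M-\frac12,k}=F^y_{j,M+\frac12}=F^y_{j,-M-\frac12}=0$ ensure that no boundary terms survive and that the two directions decouple cleanly. Using $\xi_{j,k}-\xi_{j+1,k}=\Delta x\,u_{j+\frac12,k}$ and the analogous identity in $y$, this step yields
$$\frac{d}{dt}E_\Delta(t)=-\Delta x\Delta y\sum_{j,k}\Big[u_{j+\frac12,k}F^x_{j+\frac12,k}+v_{j,k+\frac12}F^y_{j,k+\frac12}\Big].$$

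I would then expand the upwind fluxes~\eqref{nfx} and use the elementary identities $u\,u^+=(u^+)^2$ and $u\,u^-=(u^-)^2$ together with $(u^+)^2+(u^-)^2=u^2$ (exactly one part is nonzero). This gives, for each edge,
$$u_{j+\frac12,k}F^x_{j+\frac12,k}=(u^+_{j+\frac12,k})^2\rho^{\rm E}_{j,k}+(u^-_{j+\frac12,k})^2\rho^{\rm W}_{j+1,k}\ge (u_{j+\frac12,k})^2\min(\rho^{\rm E}_{j,k},\rho^{\rm W}_{j+1,k}),$$
and likewise in $y$, where nonnegativity of the reconstructed point values~\eqref{pv2} is essential.

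The main point over the 1-D case is the final combination step: each directional bound carries its own two-point minimum, and I must show these dominate the common four-point minimum appearing in the definition of $I_\Delta$. This follows since $\min(\rho^{\rm E}_{j,k},\rho^{\rm W}_{j+1,k},\rho^{\rm N}_{j,k},\rho^{\rm S}_{j,k+1})$ is no larger than either pairwise minimum, so that
$$(u_{j+\frac12,k})^2\min(\rho^{\rm E}_{j,k},\rho^{\rm W}_{j+1,k})+(v_{j,k+\frac12})^2\min(\rho^{\rm N}_{j,k},\rho^{\rm S}_{j,k+1})\ge\big[(u_{j+\frac12,k})^2+(v_{j,k+\frac12})^2\big]\min(\rho^{\rm E}_{j,k},\rho^{\rm W}_{j+1,k},\rho^{\rm N}_{j,k},\rho^{\rm S}_{j,k+1}).$$
Summing over $j,k$ then gives $\frac{d}{dt}E_\Delta(t)\le-I_\Delta(t)$. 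I expect the only delicate bookkeeping to be keeping the index shifts in the two-directional summation by parts consistent with the four no-flux conditions; everything else is a direct transcription of the 1-D proof.
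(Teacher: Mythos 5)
Your proposal is correct and is essentially the proof the paper intends: the paper omits the 2-D proof precisely because it is the direct transcription of Theorem~\ref{dd1} that you carry out, including the directional summation by parts under the four no-flux conditions and the final step bounding each edge term by the common four-point minimum via $\min(\rho^{\rm E}_{j,k},\rho^{\rm W}_{j+1,k})\ge\min(\rho^{\rm E}_{j,k},\rho^{\rm W}_{j+1,k},\rho^{\rm N}_{j,k},\rho^{\rm S}_{j,k+1})$. Your identities $u\,u^{\pm}=(u^{\pm})^2$ and $(u^+)^2+(u^-)^2=u^2$, together with nonnegativity of the reconstructed point values, are exactly the mechanism used in the 1-D argument.
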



\section{Numerical Experiments}\label{sec3}

In this section, we present several numerical examples, focusing mainly on the steady states or long time behaviors of the solutions to the general equation
\[
 \rho_t=\nabla\cdot\big[\rho\nabla\big(H'(\rho)+V(\mathbf{x})+
    W\ast\rho\big)\big],
\quad \mathbf{x}\in\mathbb{R}^d,\ t>0.
\]
A previous detailed study in \cite{Filbet} for the degenerate parabolic and drift-diffusion equations demonstrated the good performance of the method (with small variants) in dealing with exponential rates of convergence toward compactly supported 
Barenblatt solutions. Here we will concentrate mostly on cases with the  interaction potential $W$, and show that key properties like 
non-negativity and entropy dissipation are preserved. 
We will first start our discussion by using some test cases to validate the order of convergence of the scheme in space and its relation to the regularity of the steady states. 
If the solution $\rho$ is smooth, the spatial discretization given in Section~\ref{sec2} is shown to be of second order. However, in practice, the steady states of~\eqref{1.1} are usually compactly supported, with discontinuities in the derivatives or even in the solutions themselves near the boundaries. This loss of regularity  of the steady states usually leads to degeneracy in the order of convergence, as shown in Examples \ref{ex31}--\ref{ex34}.
Then, we will illustrate with several examples that the presented finite-volume scheme can be 
used for a numerical study of many challenging questions in which theoretical analysis has not yet been fully developed.

\subsection{Steady states: Spatial Order and Time Stabilization}\label{sec31}
\begin{example}[{\bf Attractive-repulsive kernels}]\label{ex31}
We first consider equation \eqref{1.1} in 1-D with only the interaction kernel $W(x) = |x|^2/2-\log |x|$ (i.e., $H(\rho)=0, V(x)=0$). The 
corresponding unit-mass steady state is given by (see \cite{MR1485778}):
$$
\rho_\infty(x) = \begin{cases}
\frac{1}{\pi}\sqrt{2-x^2},\qquad &|x|\leq \sqrt{2},\cr
0, &\mbox{otherwise}\,.
\end{cases}
$$
and is  H\"{o}lder continuous with exponent $\alpha=\frac{1}{2}$.
This steady state is the unique global minimizer of the free energy~\eqref{fe} and it approached by the solutions of~\eqref{1.1} with an exponential convergence rate as shown in~\cite{CFP}. We compute $\rho_\infty$ by numerically solving \eqref{1.1} at large time, with the initial condition $\rho(x,0)=\frac{1}{\sqrt{2\pi}}e^{-x^2/2}$. In Figure \ref{fig:quadlog_prof}{\bf (a)}, we plot the numerical steady state 
obtained on a very coarse grid with $\Delta x=\sqrt{2}/5$. As one can see, even on such a coarse grid, the numerical steady state is in  good agreement with the exact one, except near the boundary $x = \pm \sqrt{2}$. The spatial convergence error of the steady states in $L^1$ norm and $L^\infty$  norm is shown in Figure~\ref{fig:quadlog_prof}{\bf (b)}.  As a general rule, the practical convergence error of the numerical steady state is $\alpha$ in $L^\infty$ norm and $\alpha+1$ in $L^1$ norm, if 
the exact steady state is $C^\alpha$-H\"older continuous. 

\begin{figure}[ht!]
  \begin{center}
\includegraphics[totalheight=0.25\textheight]{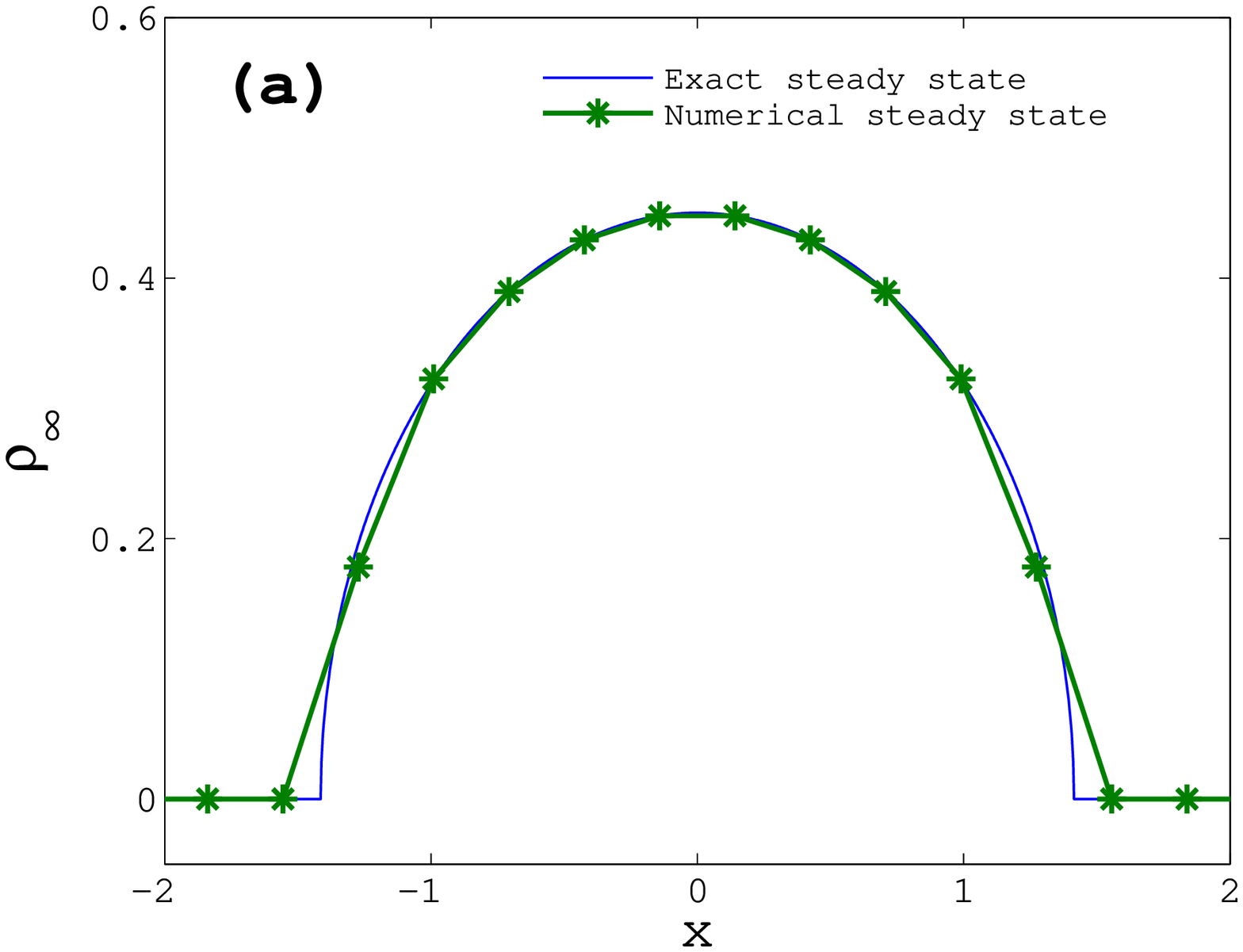}
$~~~$
\includegraphics[totalheight=0.25\textheight]{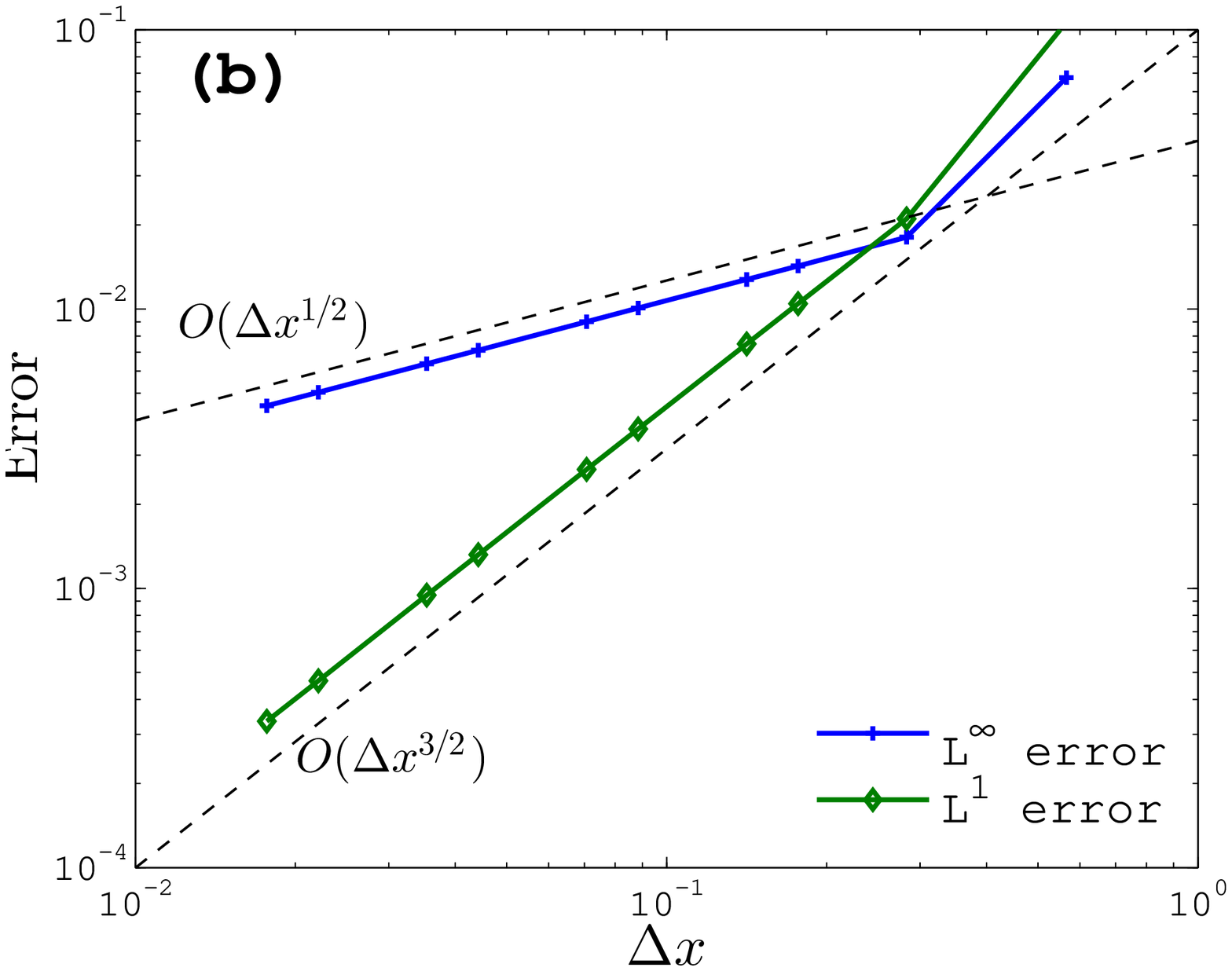}
\end{center}    
\caption{{\bf (a)} The numerical steady state with grid size $\Delta x = \sqrt{2}/5$, compared with the exact one. {\bf (b)} The convergence of error in $L^1$ and $L^\infty$ norms. Here the $L^1$ norm is computed by taking the numerical steady state piecewise constant inside each cell and $L^\infty$ norm is evaluated only at the 
cell centers.}
\label{fig:quadlog_prof}
\end{figure}
\end{example}

\begin{example}[{\bf Nonlinear diffusion with nonlocal attraction kernel}]\label{ex32} 
Next, we consider the equation \eqref{1.1} in 1-D with 
$H(\rho) = \frac{\nu}{m}\rho^m$, $W(x) = W(|x|)$ and $V(x) = 0$,
where $\nu>0, m>1$ and $W\in \mathcal{W}^{1,1}(\mathbb{R})$ is an increasing function on
$[0,\infty)$, i.e.,
\begin{equation}\label{eq:nldiff}
    \rho_t= \big( \rho (\nu\rho^{m-1}+W\ast \rho)_x\big)_x.
\end{equation}
This equation arises in  some physical and biological modelling with competing nonlinear diffusion and nonlocal attraction, see \cite{MR2257718} for instance. The attraction represented by convolution $W*\rho$ is relatively weak (compared to that in the Keller-Segel model discussed below), and the solution does not blow up with bounded initial data, while the long time behavior of the solution is characterized by an extensive study of the steady states in~\cite{BFH}. When $m>2$, the attraction dominates the nonlinear diffusion, leading to a compactly supported steady state. When $m<2$, the behavior depends on the diffusion coefficient $\nu$: there is a local steady state for small $\nu$ with localized initial data and the solution always decays to zero for large $\nu$. The borderline case $m=2$ is investigated in~\cite{BDF} for non-compactly supported kernels, where the evolution depends on the coefficient $\nu$, the total conserved mass, and $\|W\|_1$.

We begin by numerically calculating the solutions to the 1-D equation \eqref{eq:nldiff} with nonlinear diffusion and $W(x)=-e^{-|x|^2/2\sigma}/\sqrt{2\pi\sigma}$, for some constants $\sigma>0$. The corresponding steady states can also be obtained by implementing an iterative procedure proposed in~\cite{BFH}. Here, we compute the steady state solutions $\rho_\infty$ by the time evolution of \eqref{eq:nldiff} subject to Gaussian-type initial data 
$$
\rho_0(x)=\frac{1}{\sqrt{8\pi}}\left[e^{-0.5(x-3)^2}+e^{-0.5(x+3)^2}\right].
$$ 
The simulations are run on the computational domain $[-6,6]$ with the mesh size $\Delta x =0.02$ for large time until stabilization and the results are plotted in Figure \ref{fig:aggdiff1d_prof}{\bf (a)} for different values of $m$.  As one can observe, the boundary behavior of the 
compactly supported steady states has a similar dependence on $m$ as the Barenblatt solutions of the classical porous medium equation 
$\rho_t = \nu\big(\rho(\rho^{m-1})_x\big)_x$, that is, only H\"{o}lder continuous with exponent $\alpha=\min\big(1,1/(m-1)\big)$. Using the 
steady states of~\eqref{1.1} computed by the iterative scheme proposed in~\cite{BFH}, we can  check the spatial convergence 
error of our scheme on different grid sizes $\Delta x$. As shown in Figure~\ref{fig:aggdiff1d_prof}{\bf (b)}, the spatial convergence error of the steady states is $\min\big(2,m/(m-1)\big)$ in $L^1$ norm and is 
$\min\big(1,1/(m-1)\big)$ in $L^\infty$ norm.
\begin{figure}[ht!]
\begin{center}
\includegraphics[totalheight=0.25\textheight]{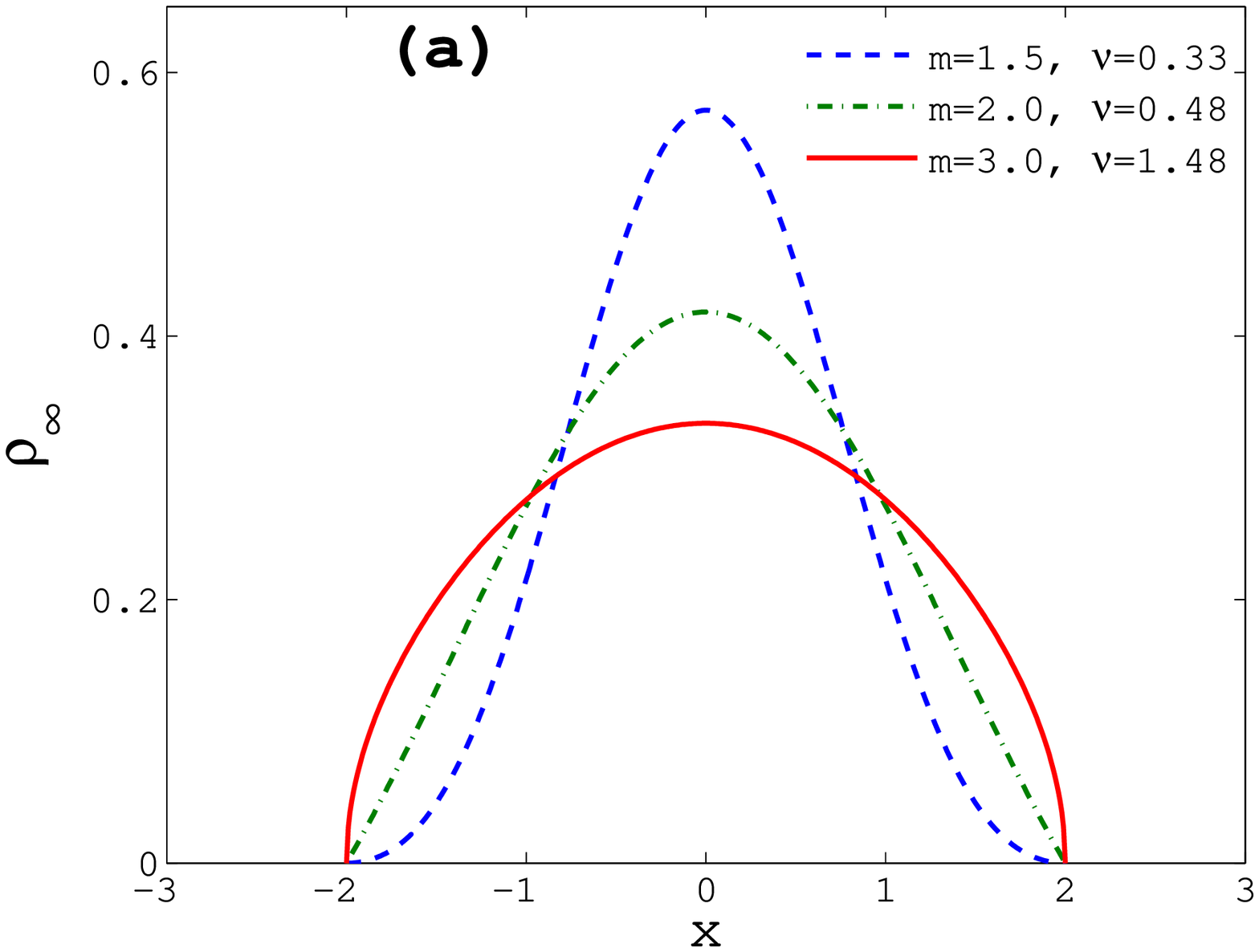} 
$~~~$
\includegraphics[totalheight=0.25\textheight]{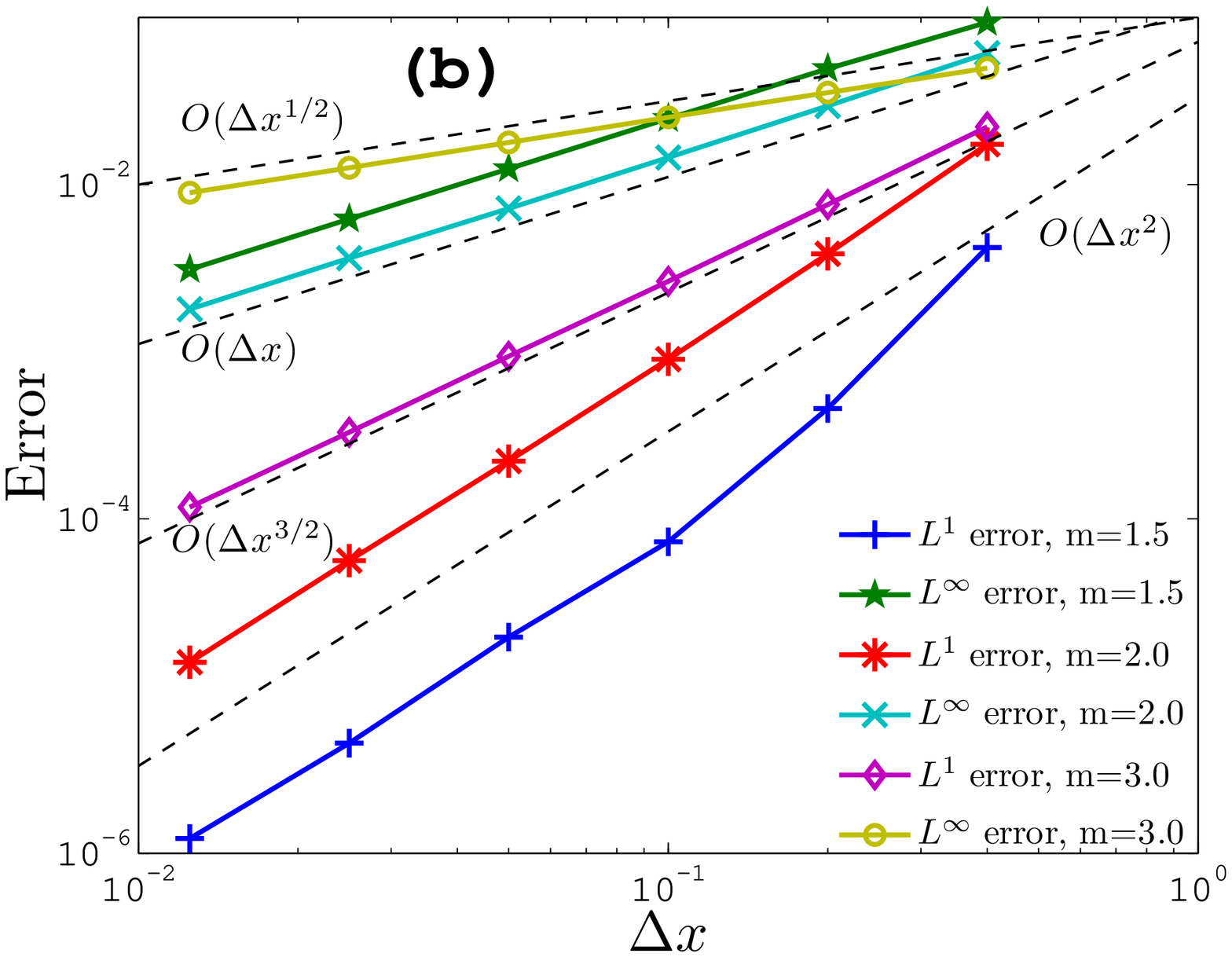}
\end{center}    
\caption{{\bf (a)} The steady states with unit total mass for
different $m$ have H\"{o}lder exponent $\alpha=\min\big(1,1/(m-1)\big)$ and $\sigma=1$,
where $\nu$ is chosen such that the corresponding $\rho_\infty$ is supported on $[-2,2]$. {\bf (b)} The convergence of the steady states $\rho_\infty$ on different grid size 
$\Delta x$, which is $\min\big(2,m/(m-1)\big)$ in $L^1$ norm and is $\min\big(1,1/(m-1)\big)$ in $L^\infty$ norm.}
\label{fig:aggdiff1d_prof}
\end{figure}

Now let us turn our attention to the time evolution and the stabilization in time toward equilibria and show that the convergence in time toward equilibration can be arbitrarily slow. This is due to the fact that the effect of attraction is very small for large distances. Actually, 
different bumps at large distances will slowly diffuse and take very long time to attract each other. However, once they reach certain distance, the convexity of the Gaussian well will lead to equilibration exponentially fast in time. These two different time scales can be observed in Figure 
\ref{fig:aggdiff1d_prof2}, where the time energy decay and density evolution are plotted to the solution corresponding to $m=3, \sigma=1$, and 
$\nu=1.48$ (see also Figure \ref{fig:aggdiff1d_prof}).
\begin{figure}[ht]
\begin{center}
\includegraphics[totalheight=0.25\textheight]{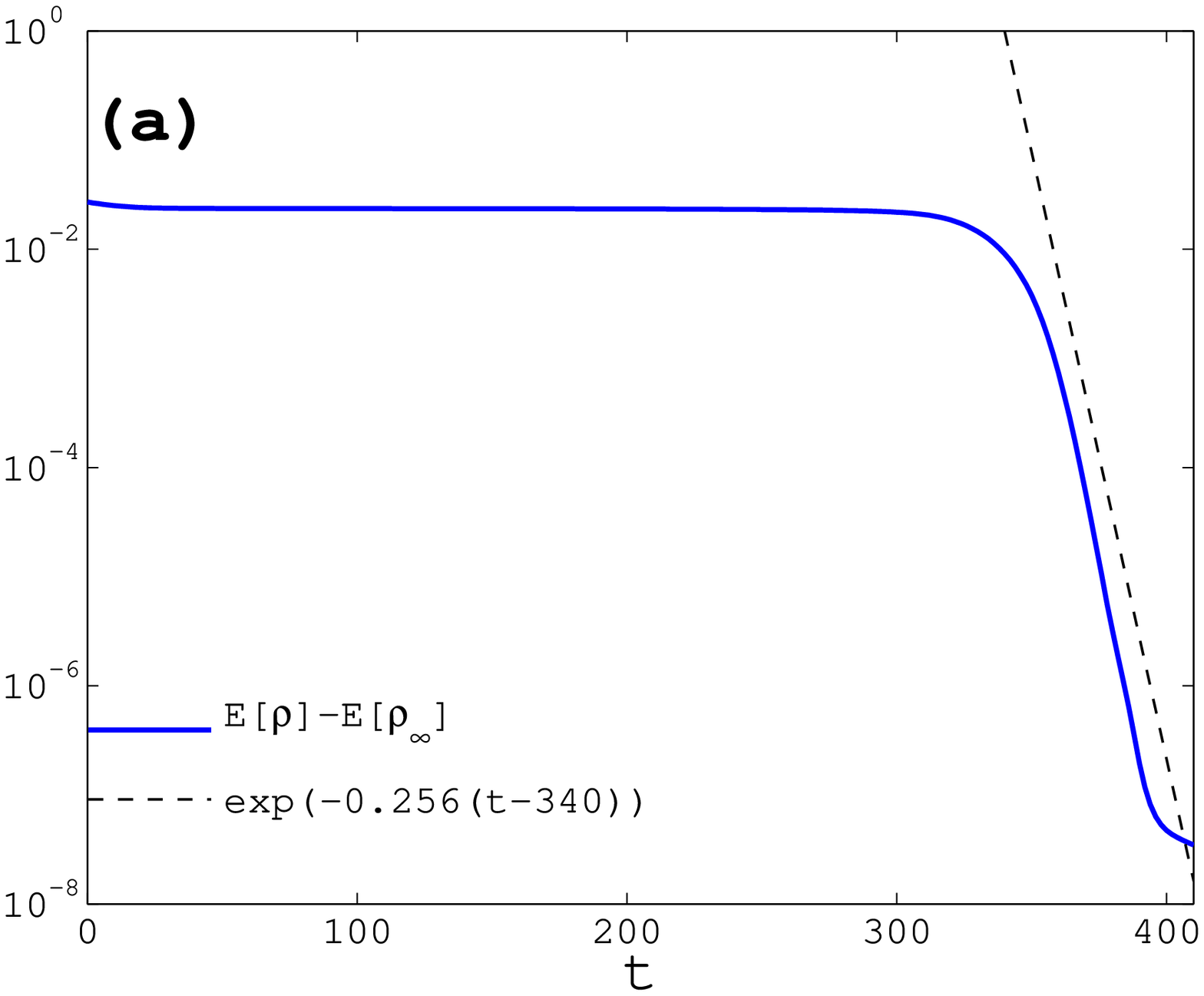} 
$~~~$
\includegraphics[totalheight=0.25\textheight]{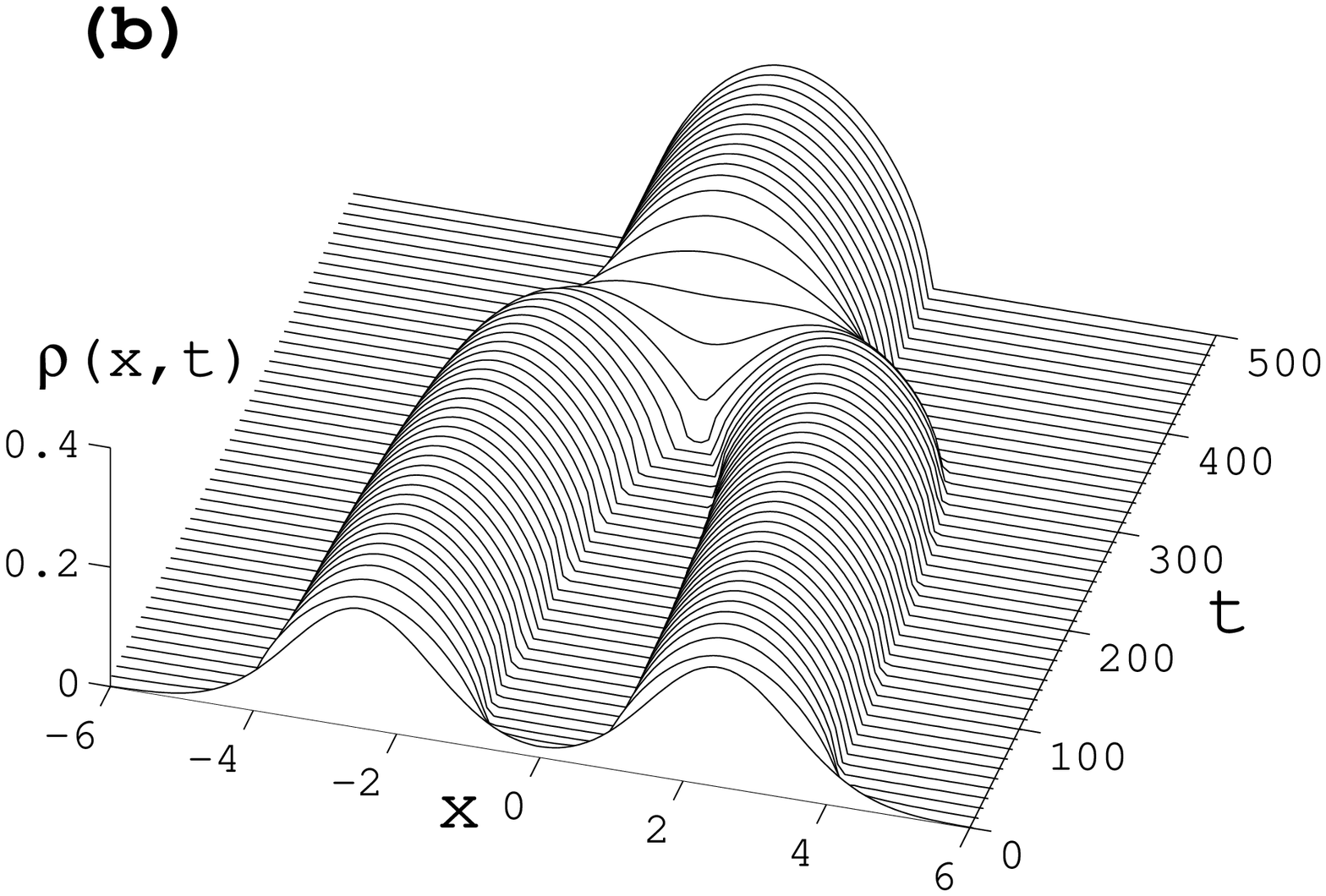}
\end{center}    
\caption{{\bf (a)} The two timescales in the decay towards the unique equilibrium solution corresponding: very slow energy decay 
followed by an exponential decay. {\bf (b)} Time evolution of the density. Here, $m=3, \sigma=1$ and $\nu=1.48$.
}
\label{fig:aggdiff1d_prof2}
\end{figure}
\end{example}

\begin{example}[{\bf Nonlinear diffusion with compactly supported attraction kernel}]\label{ex33} 
The dynamics of the solution of the 1-D equation \eqref{1.1} with characteristic 
functions as initial data is shown in Figure~\ref{fig:nldiff}, for the compactly supported interaction kernel $W(x)=-(1-|x|)_+$. For $\rho_0(x)=
\chi_{[-2,2]}(x)$, the solution forms two bumps and then 
merges to a single one, which is the global minimizer of the energy. When $\rho_0(x)=\chi_{[-3,3]}(x)$, the solution converges to three non-interacting bumps (in the sense that $\tfrac{\partial\xi}{\partial x} \rho \equiv 0$),
each of which is a steady state. 

\begin{figure}[htp]
\begin{center}
\includegraphics[totalheight=0.22\textheight]{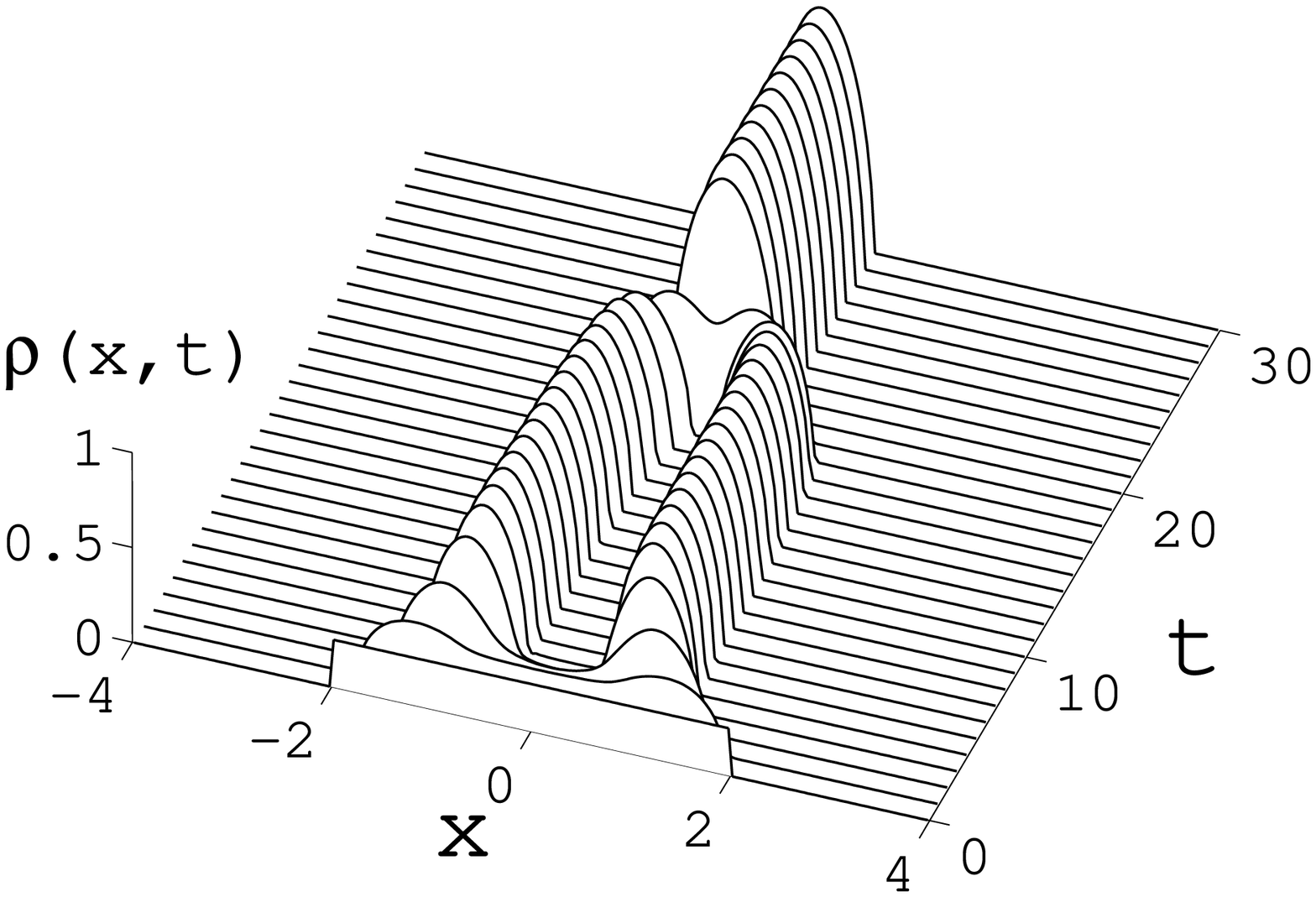}
$~ ~ ~ $        
\includegraphics[totalheight=0.22\textheight]{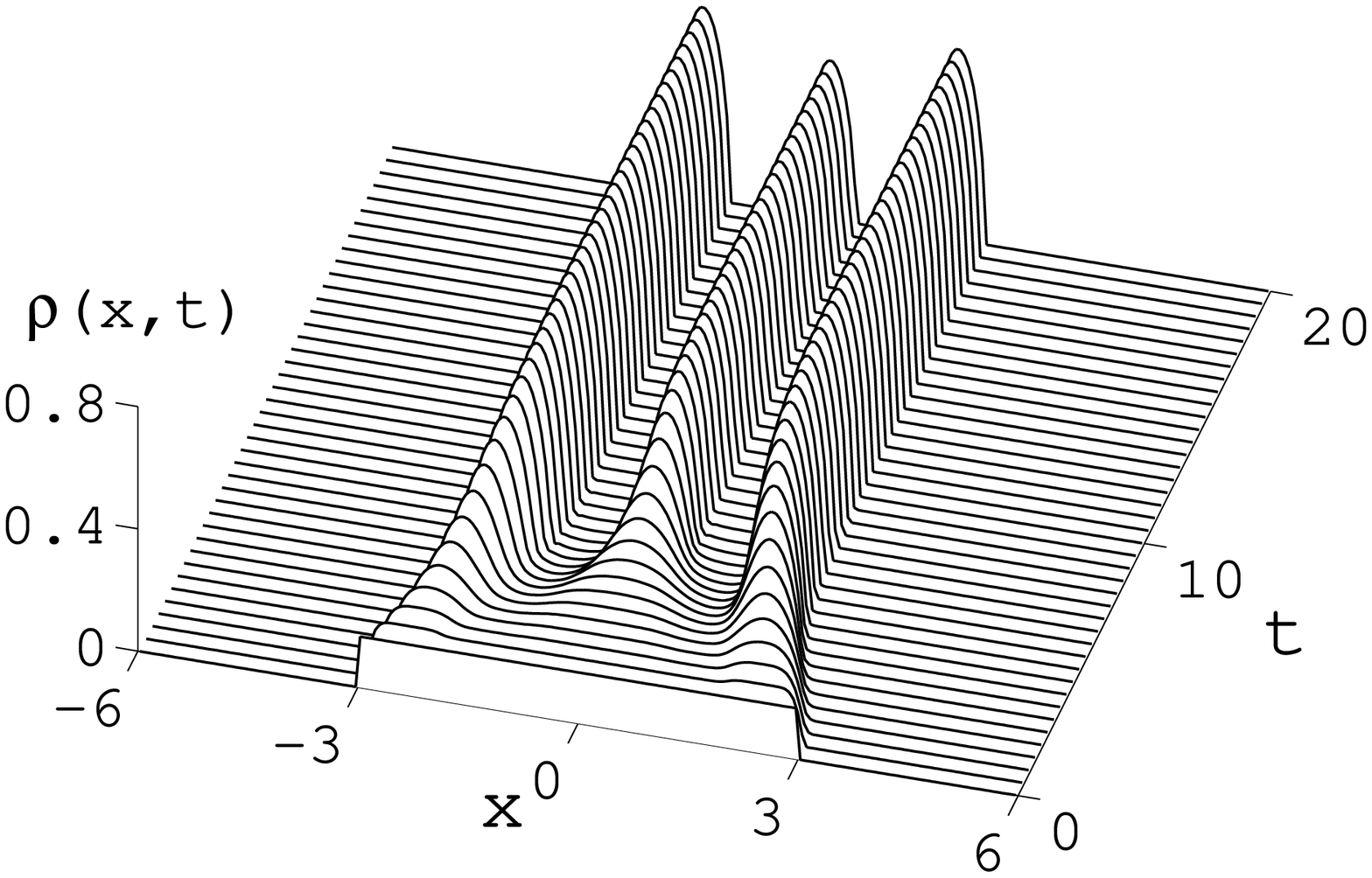}
\end{center}    
\caption{The dynamics of~\eqref{eq:nldiff} starting with the initial 
data $\rho_0=\chi_{[-2,2]}$ and $\rho_0=\chi_{[-3,3]}$.}
\label{fig:nldiff}
\end{figure}

The decay of the energy for these two cases is shown in Figure~\ref{fig:aggdiffentropy}{\bf (a)}. After the initial 
transient disappears, the energy decreases significantly at later times only when the topology changes, i.e. the merge of disconnected 
components. Although there is a steady state with one single component with all the mass, the three-bump
solutions with $\rho_0(x) =\chi_{[-3,3]}$ seems to be the correct final stable steady state. This can be confirmed from Figure~\ref{fig:aggdiffentropy}{\bf (b)}, as $\xi$ is a constant on each connected component of the support. 

This example shows a very interesting effect in this equation, which is the appearance in the long time asymptotics of steady states with 
disconnected support. It should be observed that each bump is at distance larger than 1 from the other bumps, and thus the interaction force 
exerted between them is zero. This together with the finite speed of propagation of the degenerate diffusion are the reasons why the steady 
state with the total mass and connected support is not achieved in the long time asymptotics. This fact is related to the existence of local 
minimizers of the functional \eqref{fe} in certain weak topologies, infinity Wasserstein distance, not allowing for large perturbations of the 
support, see \cite[Section 5]{BCLR2} and \cite{FR211} for related questions. 

\begin{figure}[hpt]
\begin{center}
\includegraphics[totalheight=0.25\textheight]{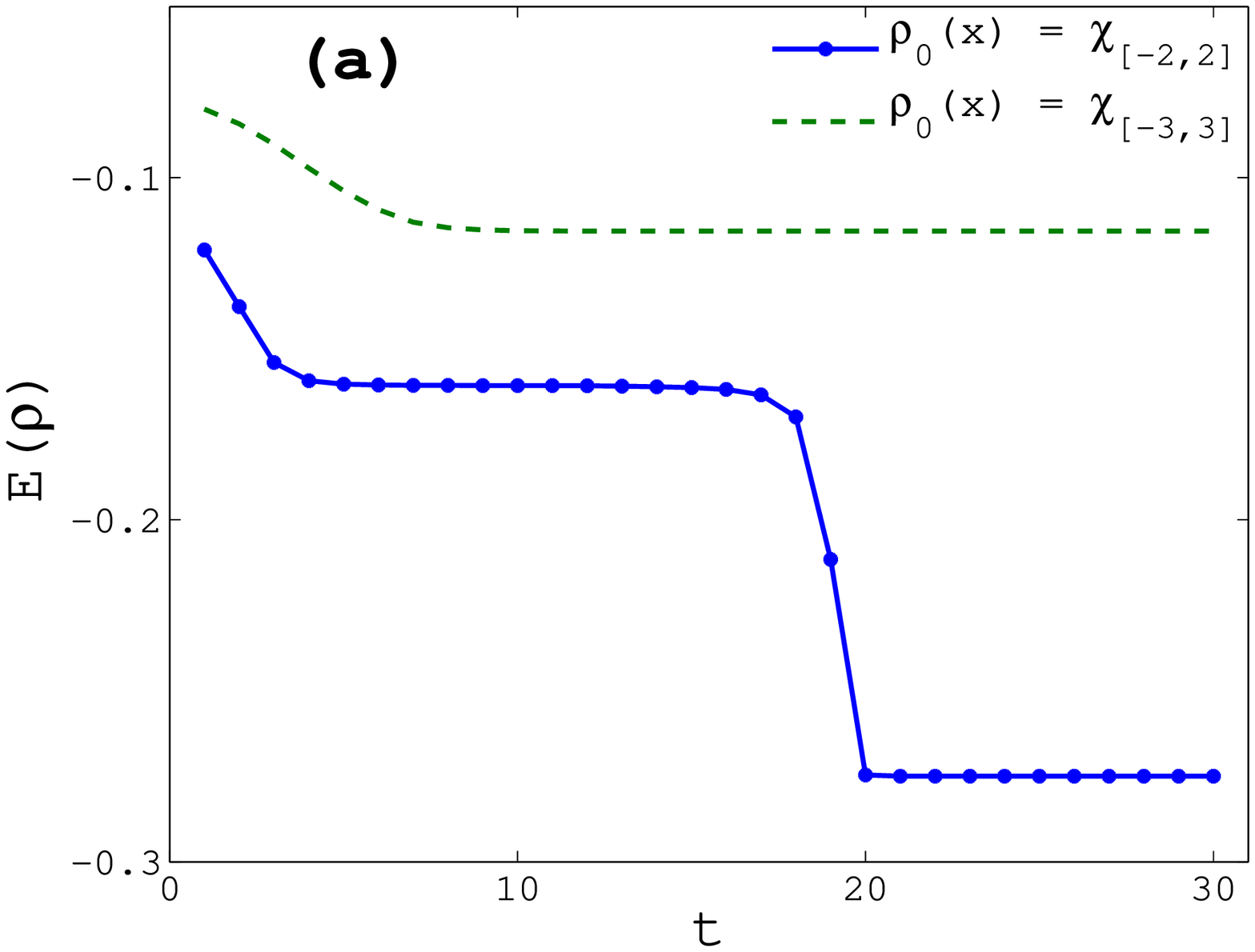}
$~~~$
\includegraphics[totalheight=0.25\textheight]{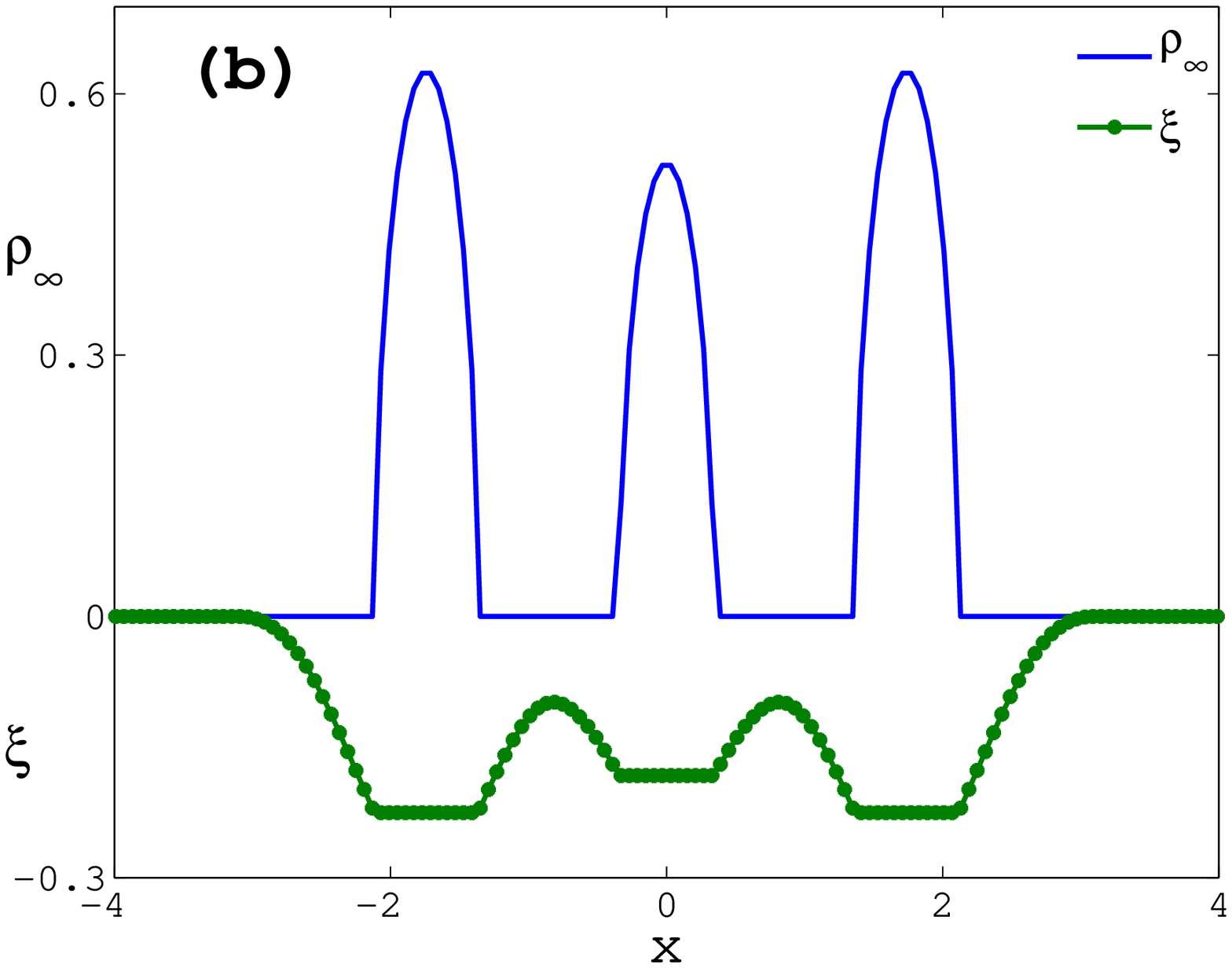}
\end{center}
\caption{{\bf (a)} The decay of the entropy of the equation~\eqref{eq:nldiff} 
with initial data $\rho_0(x)=\chi_{[-2,2]}$ and $\rho_0(x)=\chi_{[-3,3]}$. After an initial transient behavior, there is a significant decrease in the entropy only when the topology of the solution changes. {\bf (b)} The final steady state of~\eqref{eq:nldiff} with initial data $\rho_0(x)=\chi_{[-3,3]}$ and the corresponding $\xi$. Here $\xi$ assumes different constant values on different connected components of the support.}
\label{fig:aggdiffentropy}
\end{figure}

For other non-compactly supported kernels like $W(x)=-\frac{1}{2}e^{-|x|}$ or the Gaussian as in Example \ref{ex32}, there is a unique steady state with 
one single connected component in its support, though it exhibits the same slow-fast behavior in its convergence in time as shown in Figure 
\ref{fig:aggdiff1d_prof2}. This metastability and other decaying solutions when $m<2$ are discussed in more details in~\cite{BFH}.
\end{example}

\begin{example}[{\bf Nonlinear diffusion with double well external potential}]\label{ex34}
 In this example, we elaborate more on stationary states which are 
not global minimizers of the total energy. More precisely, we consider nonlinear diffusion equation for particles under an external double-well potential of the 
form
\begin{equation}\label{eq:doublewell}
\rho_t= \big( \rho (\nu\rho^{m-1}+V)_x\big)_x,\quad V(x)=\frac{x^4}{4}-\frac{x^2}{2}.
\end{equation}
 Actually, the steady states of \eqref{eq:doublewell} are of the form 
$$
\rho_\infty(x)=\left(\frac{C(x)-V(x)}{\nu}\right)_+^{\frac1{m-1}}
$$
with $C(x)$ piecewise constant possibly different in each connected component of the support.  

We run the computation with $\nu=1$, $m=2$ and initial data of the form
\begin{equation}\label{eq:dbwellinit}
\rho_0(x)=\frac{M}{\sqrt{2\pi\sigma^2}}e^{-\frac{(x-x_c)^2}{2\sigma^2}},\quad M=0.1,\ \sigma^2=0.2,
\end{equation}
corresponding to the symmetric ($x_c=0$) and asymmetric ($x_c=0.2$) cases, respectively. It is obvious that for small mass, we can get 
infinitely many stationary states with two connected components in its support by perturbing the value of $C$ defining a symmetric steady 
state. Actually, each of them has a non zero basin of attraction depending on the distribution of mass initially as shown in Figure 
\ref{fig:doublewell}{\bf (b)}.  While the global minimizer of the free energy is the symmetric steady state, the non symmetric ones are local 
minimizers in the infinity Wassertein distance or informally for small perturbations in the sense of its support. It is interesting to observe that 
even if the long time asymptotics is different for each initial data, the rate of convergence to stabilization seems uniformly 2, see Figure \ref{fig:doublewell}{\bf (a)}.

\begin{figure}[hpt]
\begin{center}
\includegraphics[totalheight=0.25\textheight]{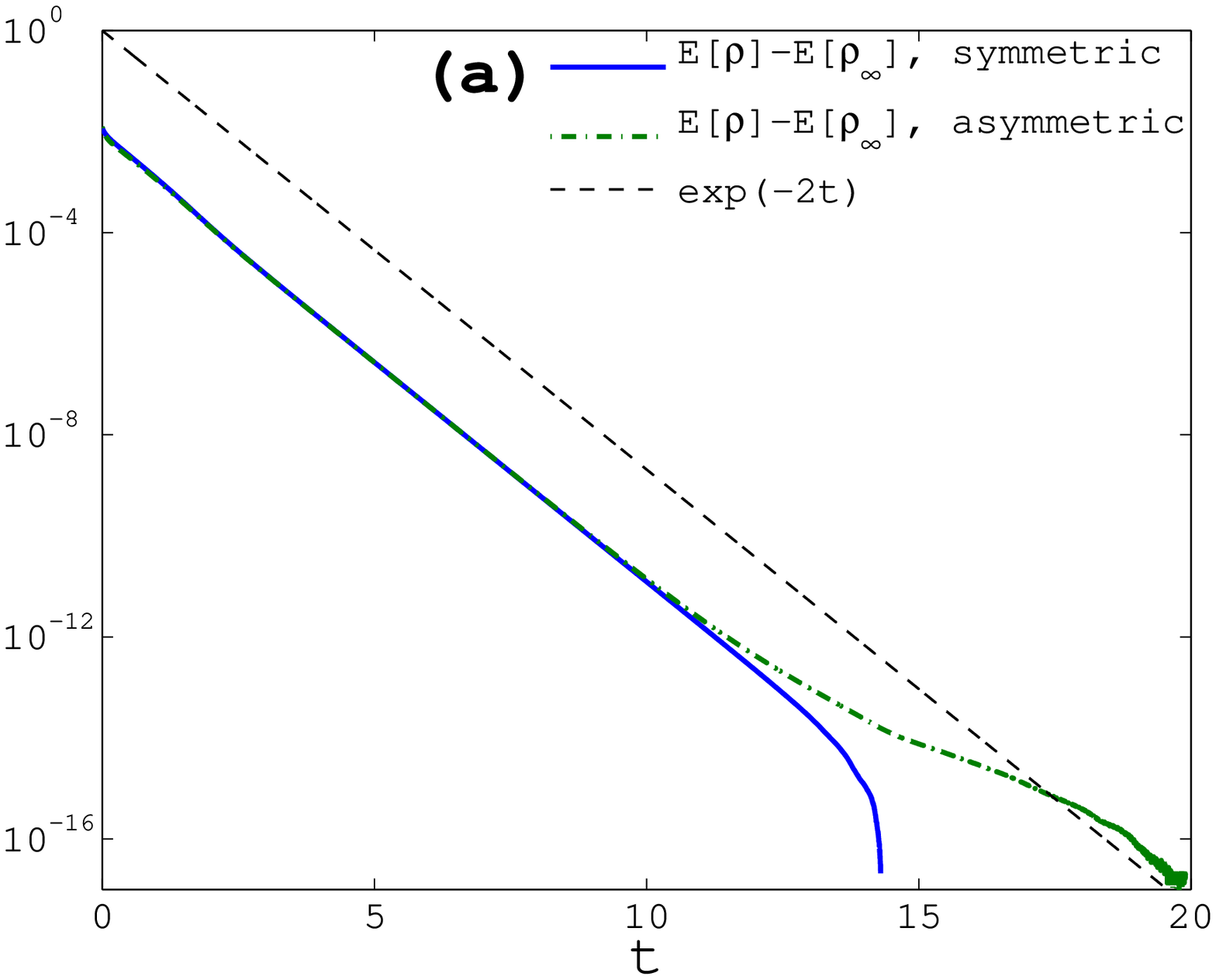}
$~~~$
\includegraphics[totalheight=0.25\textheight]{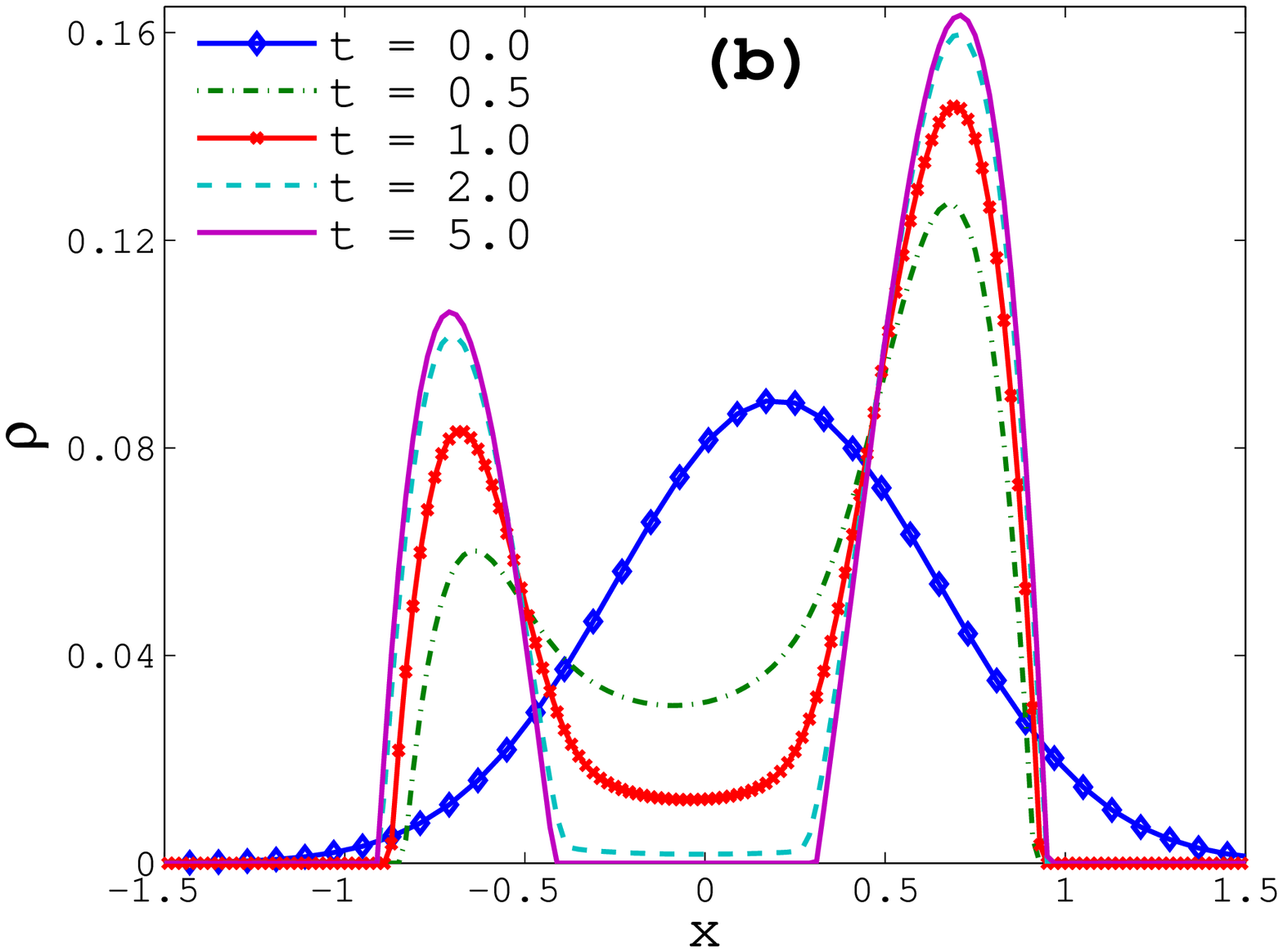}
\end{center}
\caption{ {\bf (a)} The decay of the entropy of the equation~\eqref{eq:doublewell} with initial data~\eqref{eq:dbwellinit}, for the symmetric ($x_c=0$)
and asymmetric ($x_c=0.2$) cases, respectively.  A uniform rate of convergence of order 2 is observed towards the stationary states. {\bf (b)} The evolution of the asymmetric initial data ($x_c=0.2$) towards the corresponding asymmetric stationary state.}
\label{fig:doublewell}
\end{figure}
\end{example}

\subsection{Generalized Keller-Segel model}\label{sec33}
Another related diffusion equation with nonlocal attraction is the generalized Keller-Segel model,
\begin{equation}
 \rho_t = \nabla\cdot\big(\rho \nabla(\nu \rho^{m-1}+W\ast\rho)\big),
\label{gks}
\end{equation}
with the kernel $W(\mathbf{x}) = |\mathbf{x}|^{\alpha}/\alpha$ with $-d<\alpha$ or the convention $W(\mathbf{x}) = \ln |\mathbf{x}|$ for $\alpha=0$. The bound from below in 
$\alpha$ due to local integrability of the kernel $W$.
When $\alpha = 2-d$, $W$ is the Newtonian potential in $\mathbb{R}^d$, and the equation reduces to the Keller-Segel model for chemotaxis 
with nonlinear diffusion:
\begin{equation}\label{eq:PKS}
    \rho_t = \nabla\cdot\big(\rho \nabla(\nu \rho^{m-1}-c)\big),\quad 
    -\Delta c = \rho.
\end{equation}
Compared with Example \ref{ex32} where the interaction potential $W$ is integrable, the long tail for $W(\mathbf{x}) = |\mathbf{x}|^{\alpha}/\alpha$ has non-trivial consequences. In 
certain parameter regimes, the solution can even blow up in finite time with smooth initial data. To clarify the different regimes, we can easily 
evaluate the balance between the attraction due to the nonlocal kernel $W$ and the repulsion due to diffusion by scaling arguments. In fact, taking 
the corresponding energy functional \eqref{fe} and checking the scaling under dilations of each term, we can find three different regimes:
\begin{itemize}
\addtolength{\itemsep}{-0.5\baselineskip}
\item Diffusion-dominated regime: $m>(d-\alpha)/d$. Here, the intuition is that solutions exist globally in time and the aggregation effect only 
shows in the long-time behavior where we observe nontrivial compactly supported stationary states.
\item Balanced regime: $m=(d-\alpha)/d$. Here the mass of the system is the critical quantity. There is a  
critical mass, separating the diffusive behavior from the blow-up behavior.
\item Aggregation-dominated regime: $m<(d-\alpha)/d$. Blow-up  and diffusive behavior coexist for all values of the mass 
depending on the initial concentration.
\end{itemize}

\begin{figure}[htb]
    \begin{center}
        \includegraphics[totalheight=0.21\textheight]{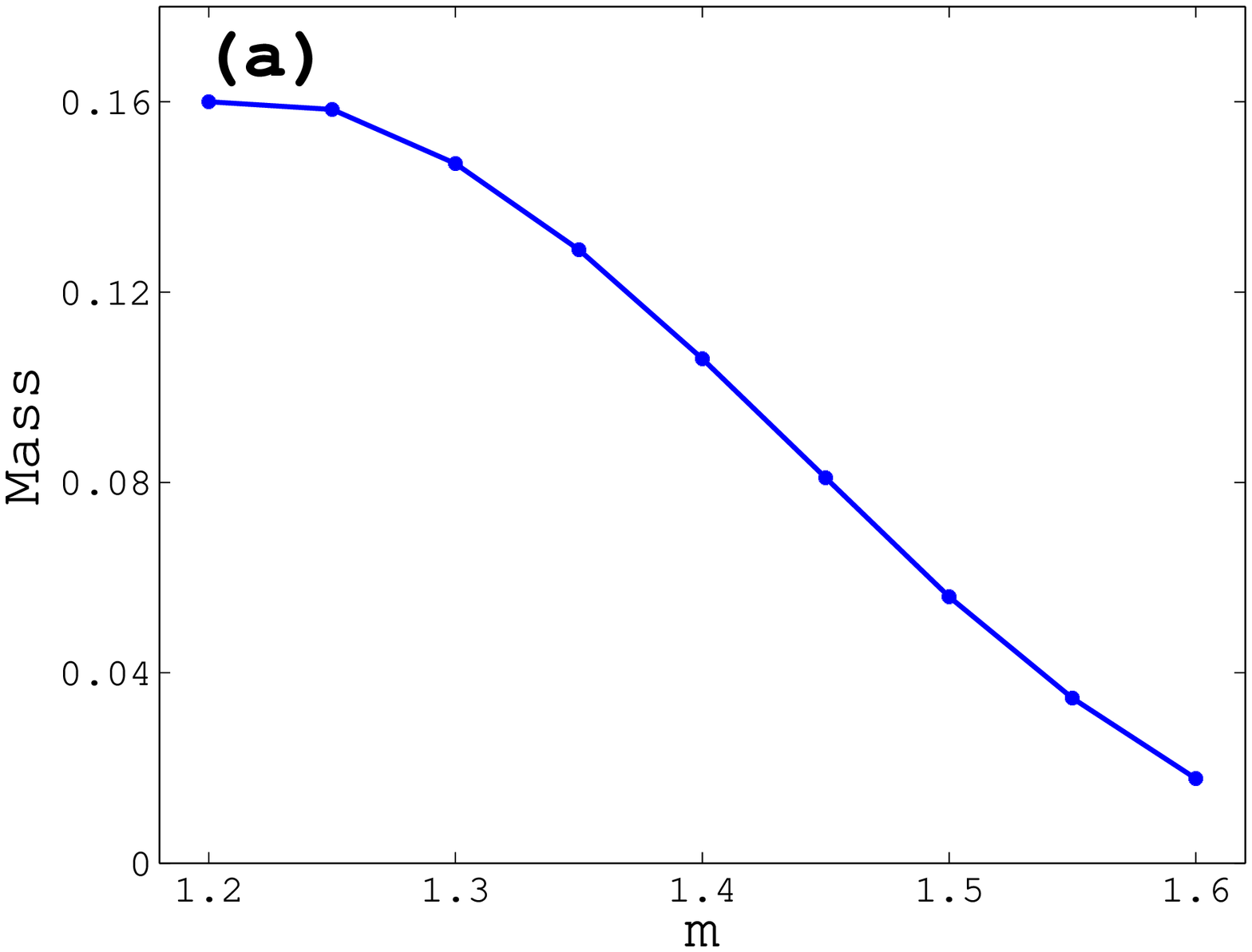}$~~$    
         \includegraphics[totalheight=0.21\textheight]{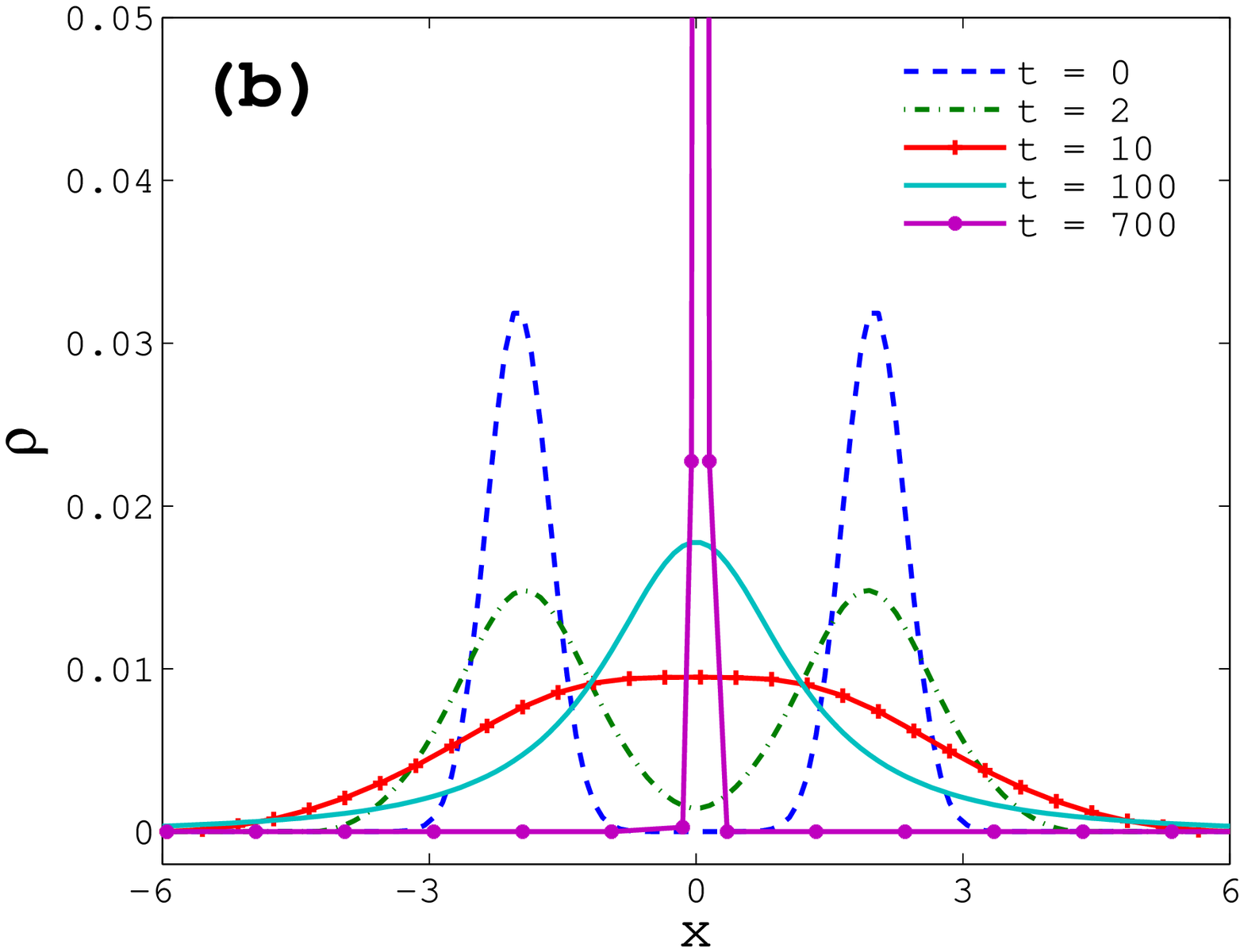}
         \includegraphics[totalheight=0.21\textheight]{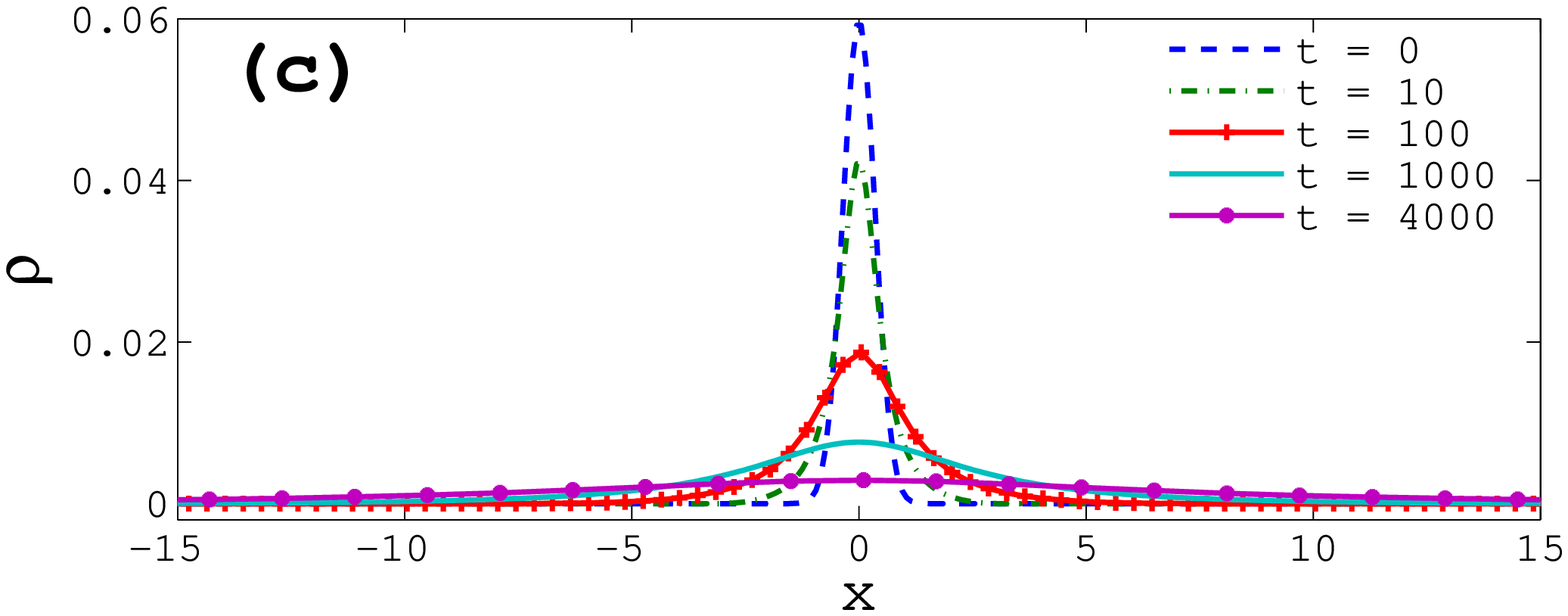}
    \end{center}
    \caption{{\bf (a)} The critical mass $M_c$ when $m+\alpha=1$, $\nu=1$
 for different  exponents $m$. {\bf (b)} The blowing up solution for $m=1.5$, $\alpha=-0.5$ and $\nu=1$ with initial data 
    $\rho_0(x) = M(e^{-4(x+2)^2}+e^{-4(x-2)^2})/\sqrt{\pi}$, where  the 
    total mass $M = 0.057 > M_c \approx 0.055$. {\bf (c)} The decaying solution for $m=1.5$, $\alpha=-0.5$ and $\nu=1$ with initial data $\rho_0(x) = Me^{-x^2}/\sqrt{\pi}$, where $M=0.53<M_c=0.55$.} 
    \label{fig:critmass}
\end{figure}

The classical 2-D Keller-Segel system corresponds to $m=1$, $\alpha=0$, see \cite{V,CC,BCM,BCC} and the references therein 
for the different behaviors. The nonlinear diffusion model for the balanced case with the Newtonian potential in $d\geq 3$ was studied in detail 
in~\cite{BCL}. Finite time blow-up solutions for general kernel $W(\mathbf{x}) = |\mathbf{x}|^{\alpha}/\alpha$ in the aggregation-dominated regime were also investigated, combined with
numerical simulations~\cite{YaoBer}.

\begin{example}[{\bf Generalized Keller-Segel model in the balanced regime}]\label{ex35}
Let us start with the 1-D example when $m+\alpha=1$  corresponding to the balanced case. Here, the behavior of the dynamics depends on the total conserved mass. The solutions blow up if the mass is 
greater than the threshold $M_c$ and otherwise the solutions decay to zero. This threshold mass can be determined by solving the equation 
with different initial conditions and is shown in Figure~\ref{fig:critmass}{\bf (a)} for different values of $m$. For example, when $m=1.5$ and $\alpha=1-m=-0.5$, the 
threshold mass $M_c$ is about $0.055$. If the initial data has a larger mass as in Figure~\ref{fig:critmass}{\bf (b)}, the solution blows up. Since the numerical method is conservative, the density concentrates inside one cell instead of being infinity. 
Otherwise, if the initial data has a smaller mass as in  Figure~\ref{fig:critmass}{\bf (c)}, the solution decays to zero.

We have also checked the self-similar behavior for subcritical mass cases $(M<M_c)$ in the sense of solving \eqref{gks} 
with $V(x)=|x|^2/2$. That is in the similarity variables, the solution of $\rho_t = \nabla\cdot\big(\rho \nabla(\nu \rho^{m-1}+W\ast\rho+|x|^2/2)\big)$ converges to the self-similar profile. 
The decay rate in time is computed for several  subcritical masses and is shown in Figure~\ref{fig:subcritmass}{\bf (a)}, illustrating that this rate is independent of the mass and is exactly $O(e^{-2t})$ as proven in the classical 2-D 
Keller-Segel model in \cite{CD}. We also observe in Figure~\ref{fig:subcritmass}{\bf (b)} how the self-similar profiles become concentrated as a Dirac Delta at the origin as $M\to M_c$.
\begin{figure}[htpb]
    \begin{center}
        \includegraphics[totalheight=0.25\textheight]{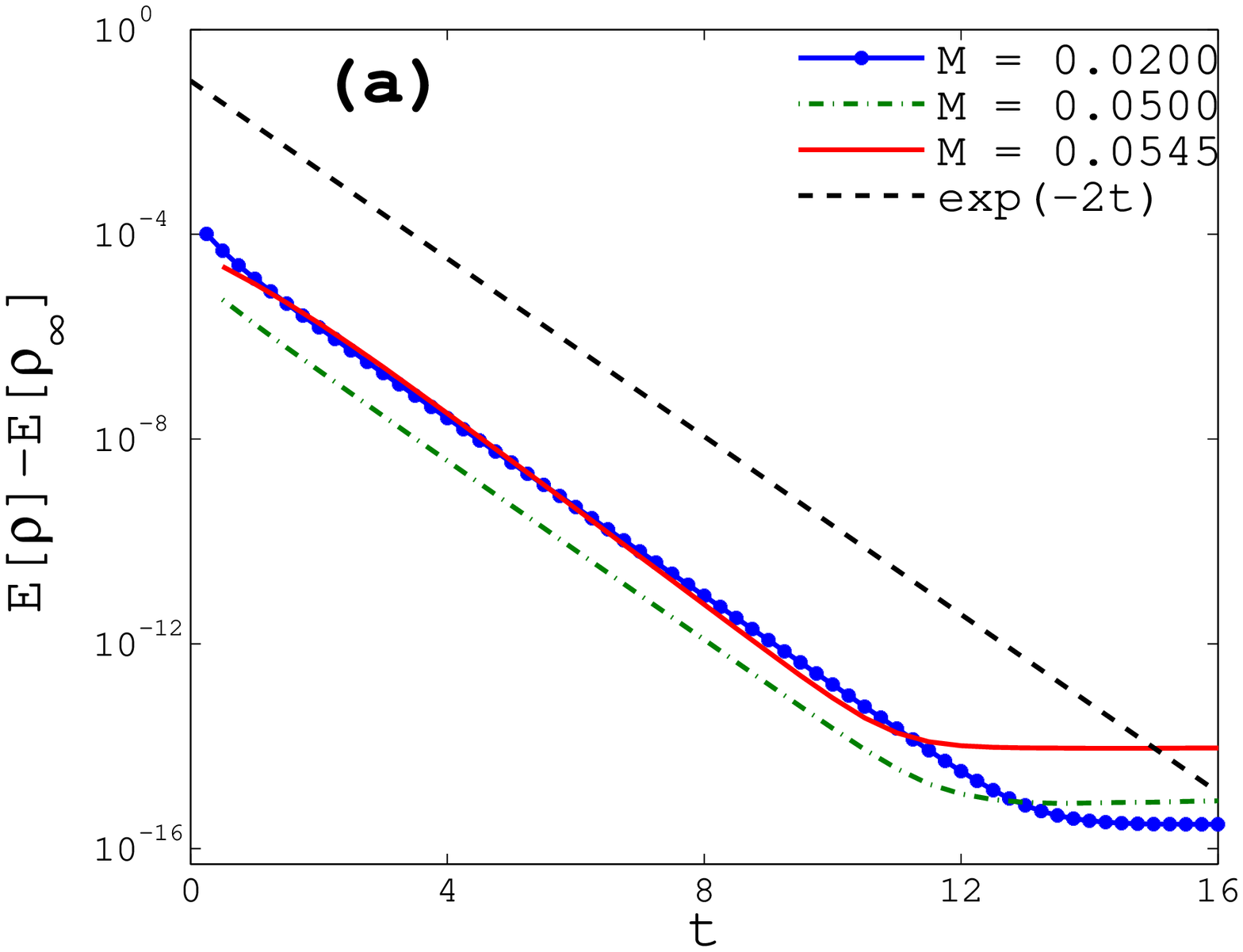}
$~~$    
        \includegraphics[totalheight=0.25\textheight]{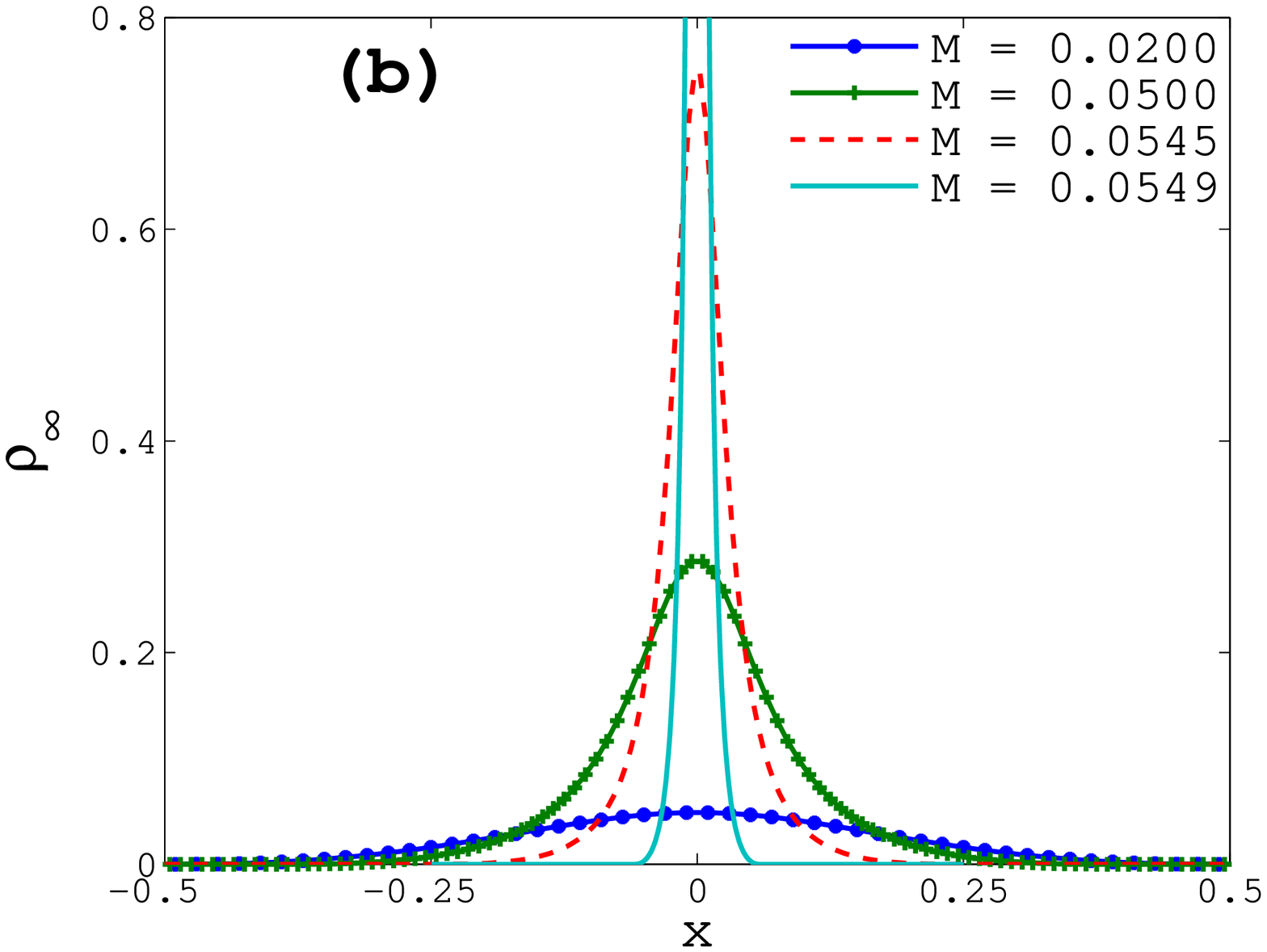}
    \end{center}
    \caption{{\bf (a)} The uniformly exponential decay towards equilibrium (in similarity variables) for subcritical mass in self-similar variables when $m=1.5$, $\alpha=-0.5$, $\nu=1$ 
for different values of the mass $M<M_c$. {\bf (b)} The equilibrium profiles for different $M<M_c$.} 
    \label{fig:subcritmass}
\end{figure}
\end{example}

\begin{example}[{\bf Generalized Keller-Segel model in the other regimes}]\label{ex36}
The general behaviors of solutions to the 1-D version of ~\eqref{gks} in other 
parameter regimes are also known to some extent. When $m>1-\alpha$ corresponding to the diffusion-dominated regime, a compact steady 
state is always expected, which is the global minimizer of the energy~\eqref{fe} as in \cite{St}. If the nonlinearity of the diffusion is increased to 
be $m=1.6$ with the same total mass $(=0.057)$ and the exponent $\alpha=-0.5$, the solution converges to a steady state as in Figure~\ref{fig:KSsupexp}
 instead of blowing up as in Figure~\ref{fig:critmass}{\bf (b)}.  When $\alpha+m<1$ corresponding to the aggregation-dominated regime, the small initial data decays to zero while large initial data blows up in finite time (see Figure~\ref{fig:KSsubexp}). The size of the initial data determining the distinct behaviors 
is usually measured in a norm different from $L^1$ (the conserved mass), and
no critical value in this norm as in the case $m+\alpha=1$ is expected.

\begin{figure}[htp]
\begin{center}
\includegraphics[totalheight=0.25\textheight]{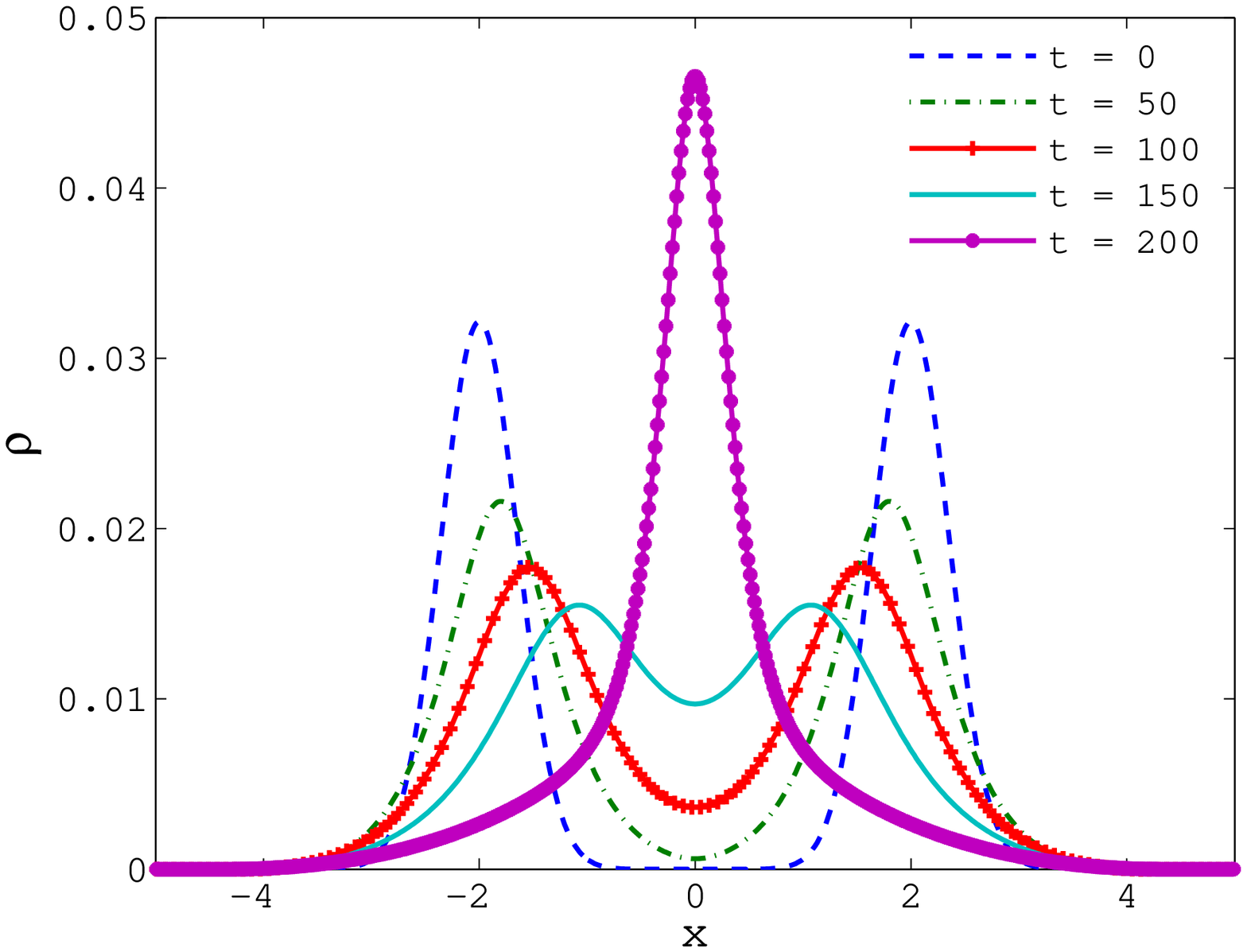}
$~~$
\includegraphics[totalheight=0.25\textheight]{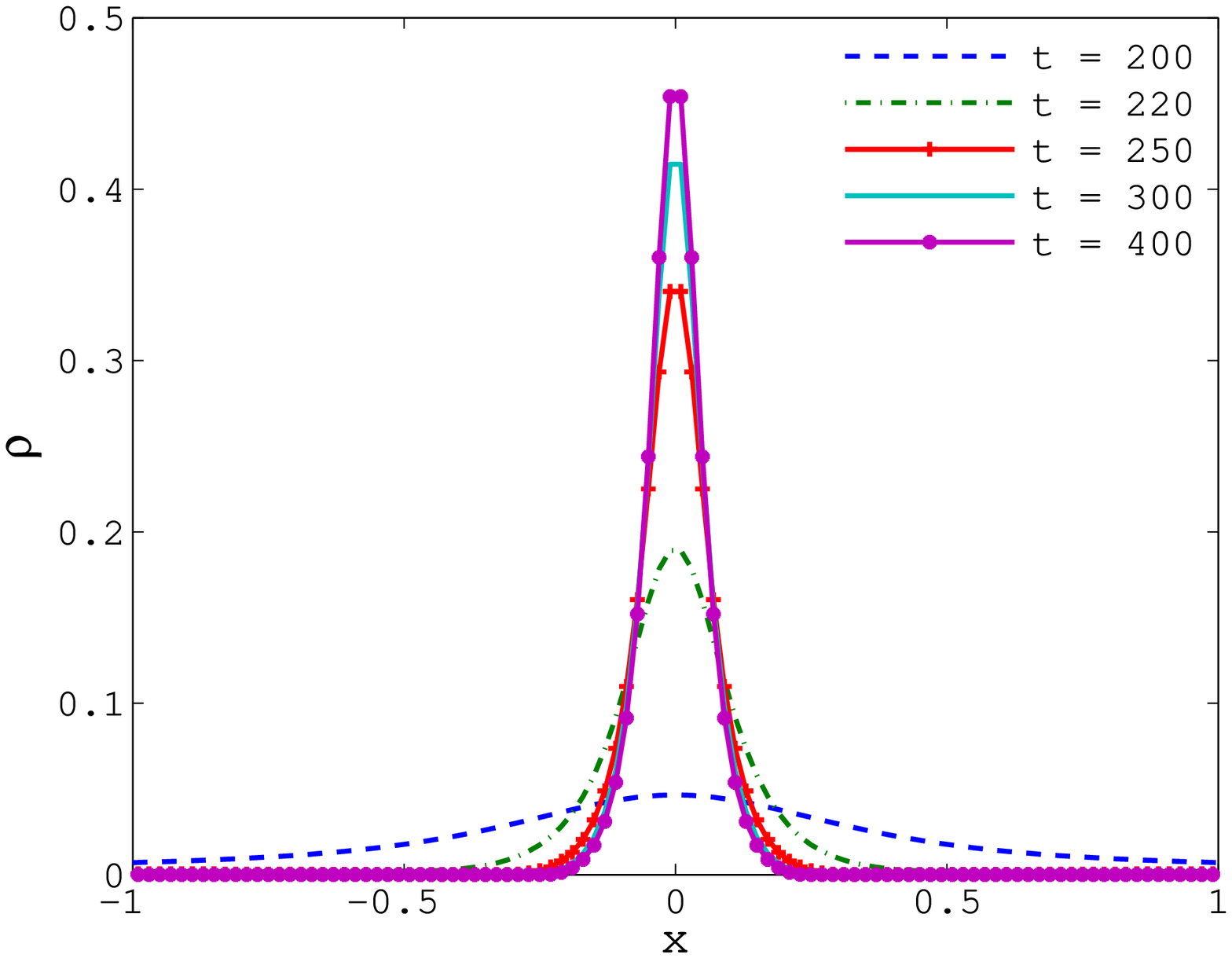}
\end{center}
\caption{ The evolution of the generalized Keller-Segel equation in the diffusion dominated regime ($m=1.6$, $\alpha = -0.5$) with
$\nu=1.0$. The initial condition $\rho_0(x) = M(e^{-4(x+2)^2}+e^{-4(x-2)^2})/\sqrt{\pi}$ ($M=0.057$) is the same as that in Figure~\ref{fig:critmass} {\bf (b)}.}
\label{fig:KSsupexp}
\end{figure}

\begin{figure}[htp]
\begin{center}
\includegraphics[totalheight=0.25\textheight]{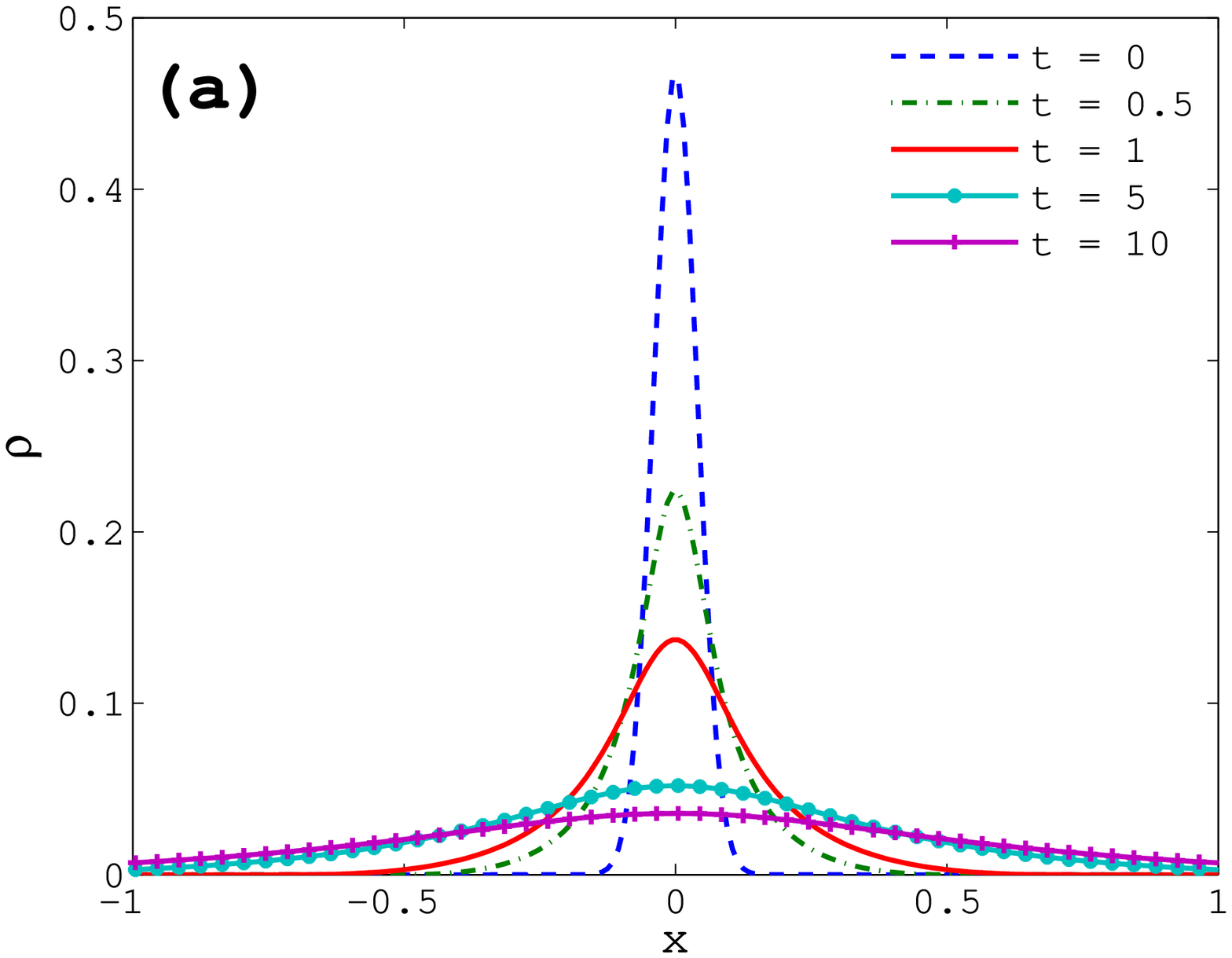}
$~~$
\includegraphics[totalheight=0.25\textheight]{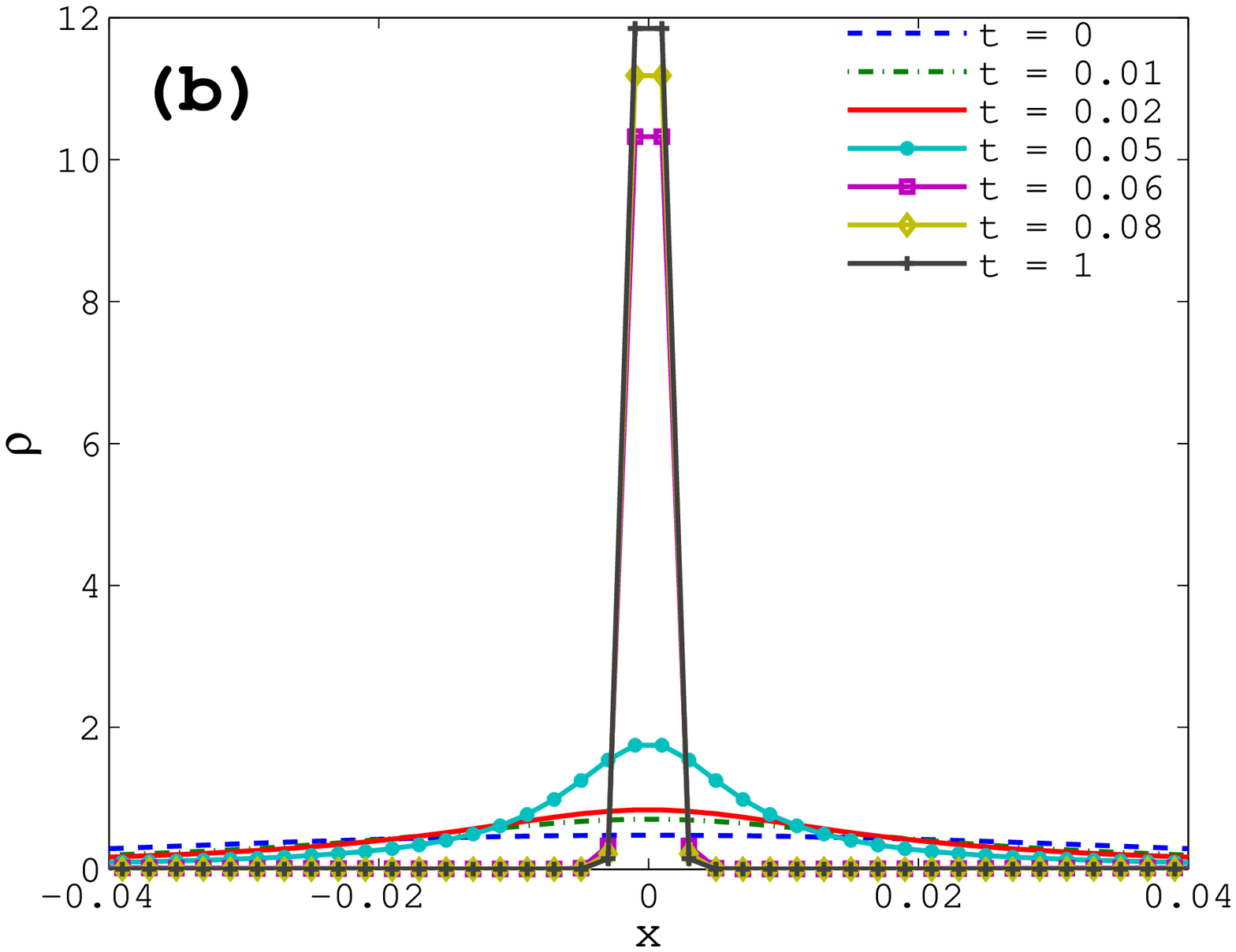}
\end{center}
\caption{ The evolution of the generalized Keller-Segel equation in the aggregation-dominated regime ($m=1.6$, $\alpha = -0.5$) with $\nu=1.0$.
The initial condition is $\rho_0(x) = M(e^{-4(x+2)^2}+e^{-4(x-2)^2})/\sqrt{\pi}$, with $M=0.047$ for decaying solution in {\bf (a)} 
and $M=0.048$ for blowup solutions in {\bf (b)}.}
\label{fig:KSsubexp}
\end{figure}

\end{example}

\subsection{Aggregation equation with repulsive-attractive kernels}
\label{sec:attrep}
In the absence of diffusion from $H(\rho)$ or confinement from
 $V$, steady states of the general equation~\eqref{1.1}
are still expected when the kernel $W$ incorporates both 
short range repulsion and long range attraction. This type of kernels
arises in the continuum formulation of moving flocks of self-propelled
particles~\cite{PhysRevE.63.017101,d2006self}, and the popular ones are the
Morse potential 
\[
    W(x) = Ce^{-|x|/\ell}-e^{-|x|}, \quad C>1,\ell<1
\]
and the power-law type
\[
    W(x) = \frac{|x|^a}{a} - \frac{|x|^b}{b},\quad a>b,
\]
with the convention that $|x|^0/0 = \ln |x|$ below.

\begin{example}[{\bf Quadratic attractive and Newtonian repulsive kernels}]\label{ex37}
The regularity of the solution  depends on the singularity of the repulsion force.
If this force is small at short distance (or equivalently $b$ is relatively large),
the solution can concentrate at a lower dimension subset, while 
more singular forces lead to smooth steady states except possible discontinuities near 
the boundary~\cite{BCLR2}. The case $a=2$ and $b=0$ is shown in Example \ref{ex31}, whose steady state is a semi-circle~\cite{MR1485778, CFP}, while the case $a=2$ and $b=1$ leads to a steady state
which is a constant supported on an interval~\cite{FR11,FHK}. 

We remind that the discrete convolution for the velocity field in \eqref{tech1} is discretized using the coefficients $W_{j-i}$, chosen as approximations of the local integral 
\begin{equation}\label{tech2}
W_{j-i}=\frac{1}{\Delta x} \int_{C_i} W(x_j-s)ds.
\end{equation}
In the case of smooth kernels $(b>0)$, we can either use the mid-point rule or a direct computation of the integral if available. We show the numerically computed stationary state with both options in Figure~\ref{fig:quadnewton} {\bf (a)} and {\bf (b)} respectively.
As one can observe, the first choice is oscillation free while the second choice with exact integrals shows an overshoot of the density near the boundary of the support. The difference between the two cases can be explained by carefully writing down the characterization of the discrete stationary states based on the discrete entropy inequality in Theorem \ref{dd1}. The mid-point rule performs better due to its symmetry that induces some numerical diffusion. 

\begin{figure}[ht]
\begin{center}
\includegraphics[totalheight=0.24\textheight]{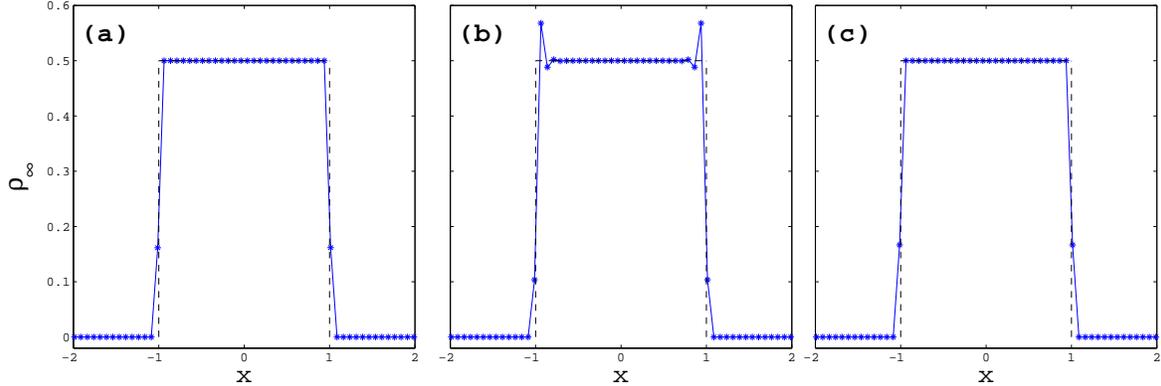}
\end{center}
\caption{The steady states computed with: {\bf (a)} mid-point quadrature rule for \eqref{tech2}; {\bf (b)} exact computation of $W_{j-i}$ in \eqref{tech2}; {\bf (c)} Same as {\bf (b)} but adding small nonlinear diffusion.} 
\label{fig:quadnewton}
\end{figure}

In case we would be dealing with singular kernels, we cannot use simple quadrature formulas like middle-point rule but rather we need to implement either quadrature formulas for singular integrals or perform exact evaluations of the integrals in \eqref{tech2}. To avoid the oscillations as in Figure~\ref{fig:quadnewton}{\bf (b)}, we added a small nonlinear diffusion term, i.e., $\rho_t = \big(\rho (\epsilon \rho + W*\rho)_x)_x$. Here quadratic nonlinear
diffusion is used, respecting the same nonlinearity and scaling as
in the original equation $\rho_t = (\rho (W*\rho)_x)_x$.  Numerical experiments as in Figure~\ref{fig:quadnewton}{\bf (c)} indicate that $\epsilon = 
0.25(\Delta x)^2$ is close to optimal, in the sense 
that $\epsilon$ is just large enough to prevent the overshoot. This
near optimal diffusion coefficient has been further confirmed by numerical experiments with different $\Delta x$. 

\begin{figure}[htp]
\begin{center}
\includegraphics[totalheight=0.24\textheight]{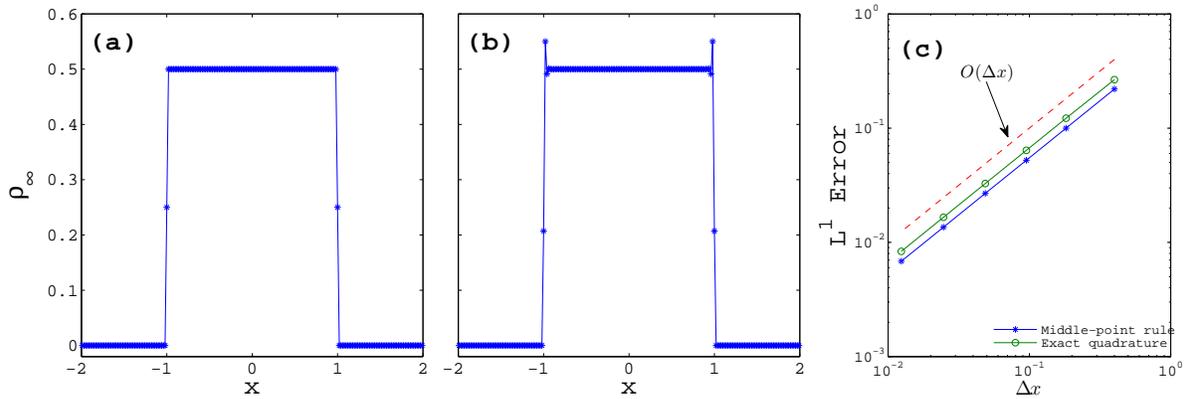}
\end{center}
\caption{The steady states computed with on a finer mesh with: {\bf (a)} mid-point rule for \eqref{tech2}; {\bf (b)} exact computation of $W_{j-i}$; {\bf (c)} the convergence of $L^1$ errors for both options.}
\label{fig:optimaldx}
\end{figure}
\end{example}

For the sake of clarity, we show in Figure~\ref{fig:optimaldx}{\bf (a)}-{\bf (b)} the steady-state solutions computed on a finer mesh for the same cases as in Figure~\ref{fig:quadnewton}{\bf (a)}-{\bf (b)} along with the $O(\Delta x)$ decay of $L^1$ errors for different grid sizes $\Delta x$ in Figure~\ref{fig:optimaldx}{\bf (c)}. The $L^\infty$ errors is almost constant and not decaying with mesh refinement.
They clearly indicate that the overshoot amplitude seen in Figures \ref{fig:quadnewton}{\bf (b)} and \ref{fig:optimaldx}{\bf (b)} is not reduced by mesh refinement and it needs the fix of small diffusion regularization. This will be further discussed in 2-D simulations below.

\subsection{Two-dimensional simulations}\label{sec34}

Now, let us illustrate the performance of the scheme in 2-D with some selected examples showcasing different numerical difficulties and interesting asymptotics. 

\begin{figure}[thp]
    \begin{center}
        \includegraphics[totalheight=0.55\textheight]{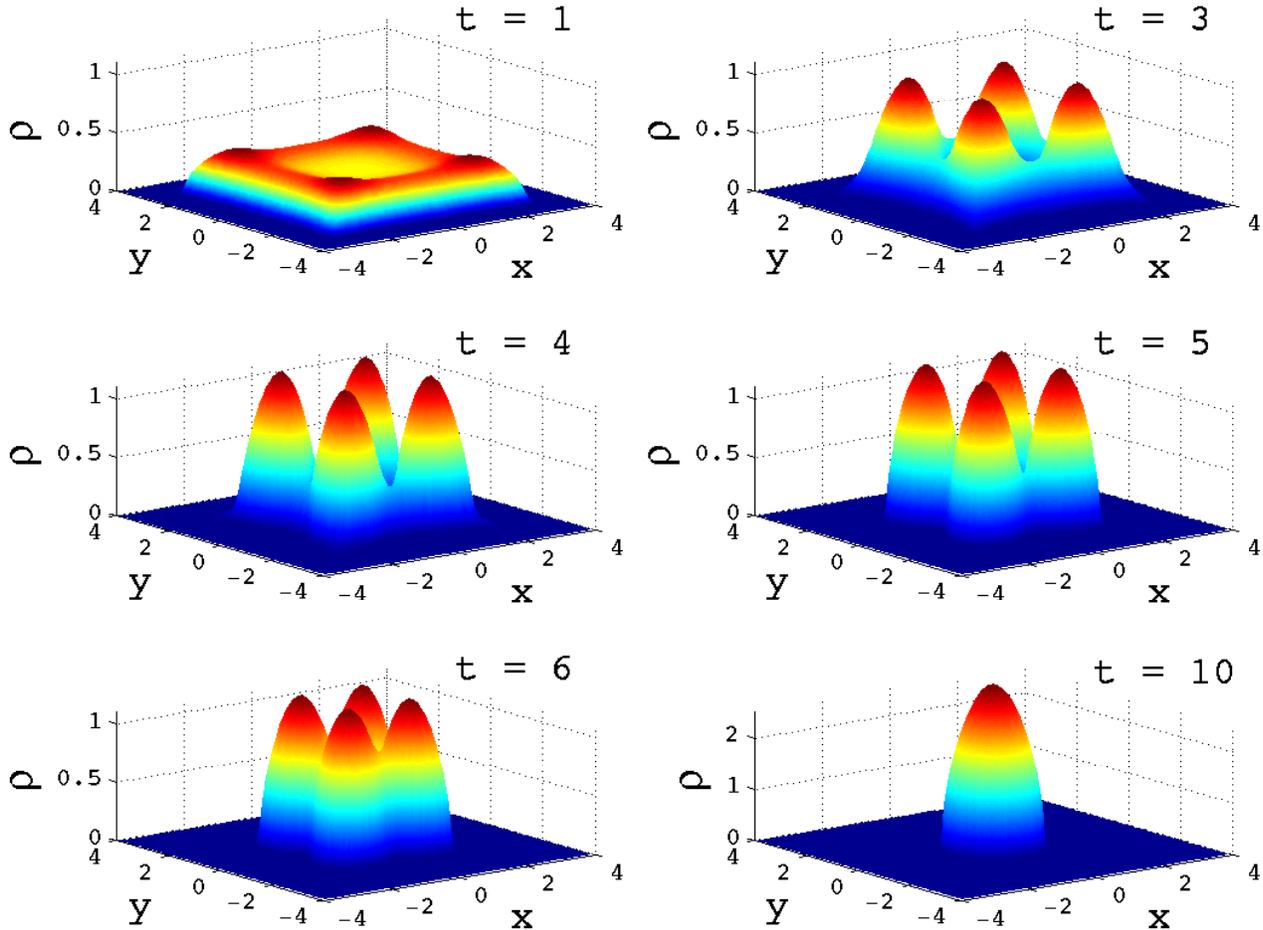}
    \end{center}
\caption{The evolution of the 2d aggregation equation with nonlinear diffusion
with $\nu = 0.1$, $m=3$, $W(\mathbf{x}) = \exp(-|\mathbf{x}|^2)/\pi$ and 
initial condition $\rho_0(\mathbf{x}) = \frac{1}{4}\chi_{[-3,3]\times[-3,3]}(\mathbf{x})$. 
The computational domain is $[-4,4]\times [-4,4]$, with grid size $\Delta x=\Delta y=0.1$ and 
time step $\Delta t=0.001$.}
\label{fig:2daggdiff}
\end{figure}

\begin{example}[{\bf Nonlinear diffusion with nonlocal attraction in 2-D}]\label{ex39}

For the equation with $H(\rho) = \frac{\nu}{m}\rho^{m}$, $W(\mathbf{x}) = -\exp(-|\mathbf{x}|^2)/\pi$ and $V\equiv 0$, the dynamics is similar to that in 1-D, being the result of the competition between the nonlinear diffusion $\nabla\cdot\big(\rho\nabla(\nu \rho^{m-1})\big)$ and the nonlocal attraction $\nabla\cdot\big(\rho\nabla W*\rho)\big)$. The evolution starting from the rescaled characteristic 
function supported on the square $[-3,3]\times [-3,3]$ is shown in 
Figure~\ref{fig:2daggdiff}. Because the interaction represented by the kernel $W(\mathbf{x})$ is nonzero for any $\mathbf{x}=(x,y)$, the final steady state
consists of one single component; however, four clumps are formed 
in the evolution, as the attraction dominates the relatively weak diffusion.

\begin{example}[{\bf Quadratic attractive and Newtonian repulsive kernel with small nonlinear diffusion}]\label{ex310} 
Similarly, overshoots may appear near the boundary of discontinuous solutions of $\rho_t = \nabla\cdot\big(\rho \nabla W*\rho\big)$ with repulsive-attractive kernels $W$. These overshoots can not be eliminated as easily as in one dimension, either by a careful choice of grid to align with the boundary or by a special numerical quadrature for $W_{i-j}$. However, stable solutions 
can be obtained by adding small nonlinear diffusion as in Example~\ref{ex37}. Therefore, we consider the equation
\[
 \rho_t = \nabla\cdot\big(\rho \nabla (\epsilon \rho + W*\rho)\big).
\]
For quadratic attractive and Newtonian repulsive kernel 
$W(\mathbf{x}) =|\mathbf{x}|^2/2 - \ln |\mathbf{x}|$, the steady states are shown in Figure~\ref{fig:quadlog2d}, without $(\epsilon=0)$ or with the diffusion. 
The near optimal coefficient $\epsilon$ is numerically shown to be close to 
$0.4((\Delta x)^2+(\Delta y)^2)$, exhibiting a similar mesh dependence as in Example~\ref{ex37}. Since $W$ is singular in this (and next) example, $W_{j,k}$ is computed using Gaussian quadrature with four points in each dimension, to avoid the evaluation of $W$ at the origin.
\end{example}

\begin{figure}[thp]
\centering
\subfloat[$\epsilon = 0$]{\includegraphics[totalheight=0.24\textheight]{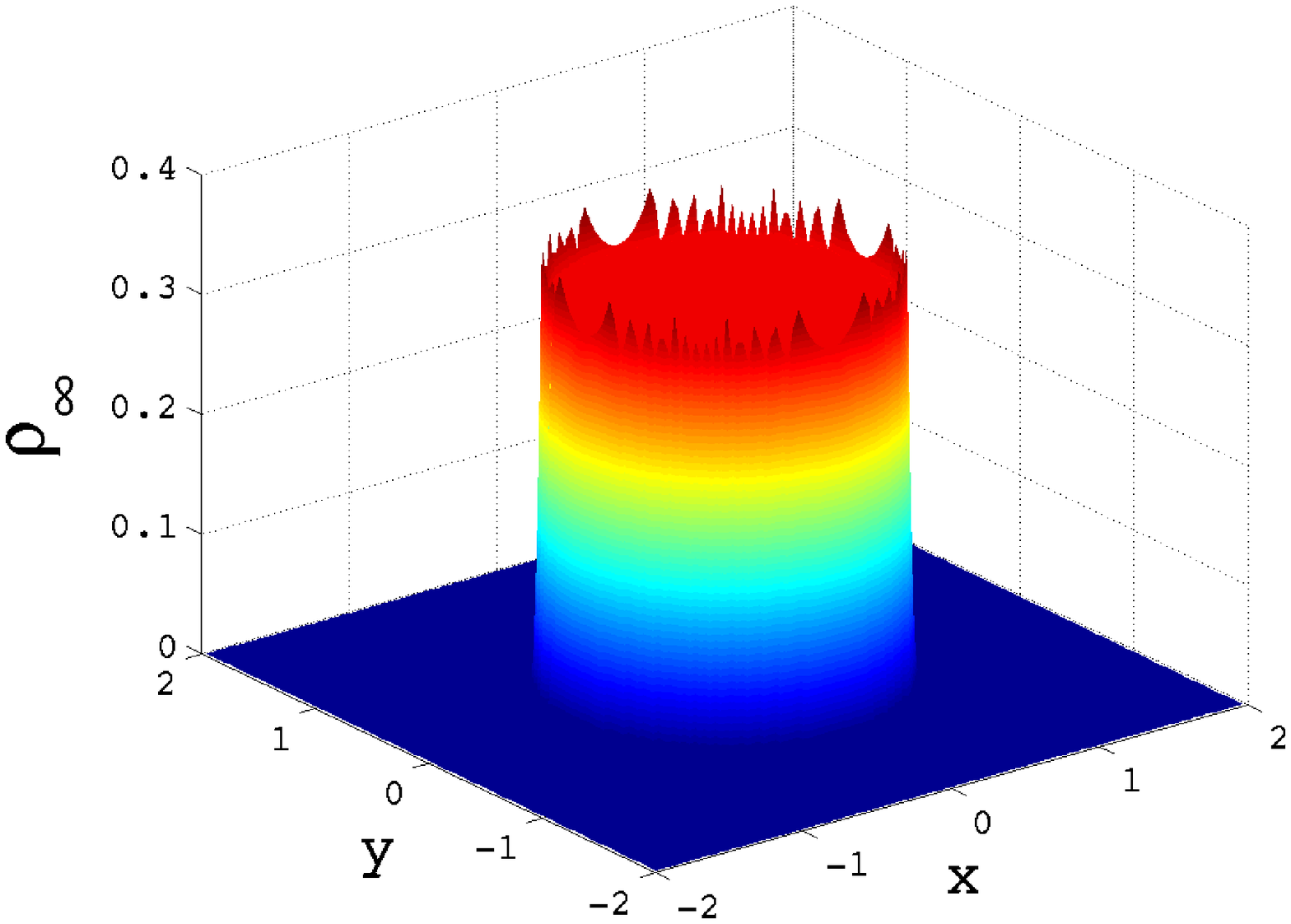}}
$~~$
\subfloat[$\epsilon = 0.4((\Delta x)^2+(\Delta y)^2)$]{\includegraphics[totalheight=0.24\textheight]{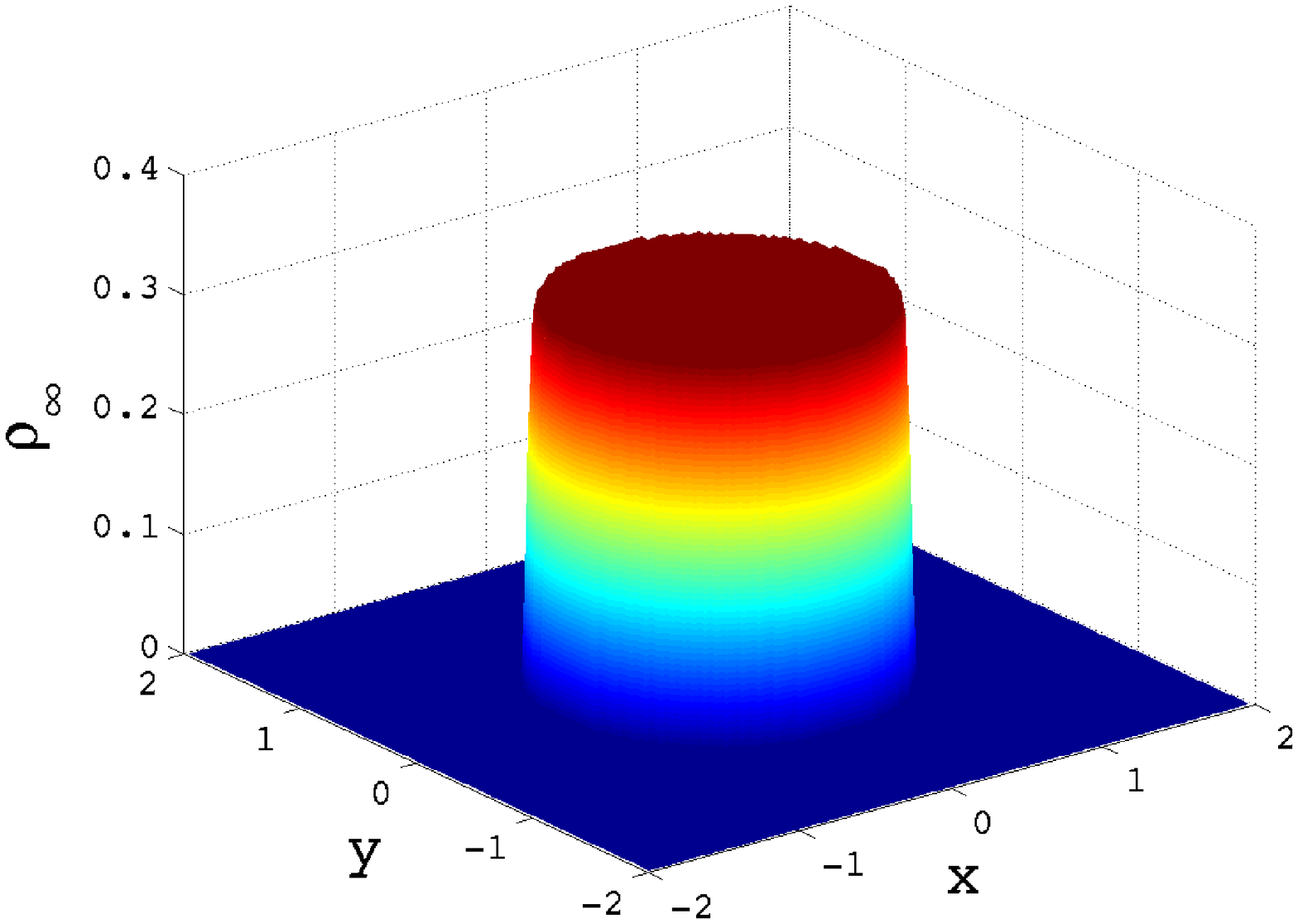}}
\caption{{\bf (a)} the steady state of the equation with $W(\mathbf{x}) =|\mathbf{x}|^2/2 - \ln |\mathbf{x}|$; {\bf (b)} the steady state with the same $W(\mathbf{x})$, regularized by quadratic diffusion $\nabla\cdot\big(\rho\nabla(\epsilon\rho)\big)$. The exact steady state 
without diffusion is the characteristic function of the unit disk with density $\frac{1}{\pi}$.}
\label{fig:quadlog2d}
\end{figure}
\end{example}

\begin{example}[{\bf Steady mill solutions}]\label{ex311}
Another common pattern observed for the self-propelled particle systems with an attractive-repulsive kernel in 2-D is the rotating mill~\cite{MR2507454}, and the steady 
pattern can be obtained from the equation
\[
\rho_t = \nabla \cdot \big(\rho \nabla(W\ast \rho-\frac{\alpha}{\beta} \log |\mathbf{x}|)\big), 
\quad \mathbf{x} \in \mathbb{R}^2,
\]
with some positive constants $\alpha$ and $\beta$.
For the kernel $W(\mathbf{x}) = \frac{1}{2}|\mathbf{x}|^2-\ln |\mathbf{x}|$, the steady state is still a constant $\rho_\infty=2$ on an annulus, whose inner and outer radius are
given by 
$$
R_0 = \sqrt{\frac{\alpha}{\beta}},\quad 
R_1 = \sqrt{\frac{\alpha}{\beta}+\frac{M}{2\pi}}, 
$$
with the total conserved mass $M=\int_{\mathbb{R}^d} \rho d{\bf x}$. For other more 
realistic kernels like the Morse type~\cite{MR2507454} or Quasi-Morse type~\cite{Carrillo2013112}, 
the radial density is in general more concentrated near the inner radius, but the 
explicit form of $\rho_\infty$ can not be obtained in general.  Numerical diffusion, in the form of $\epsilon \nabla \cdot(\rho \nabla \rho)$, is still needed 
to prevent the overshoot and the resulting steady states with $\epsilon=
0.2((\Delta x)^2+(\Delta y)^2)$ are shown in Figure~\ref{fig:mill} for 
two different potentials.

\begin{figure}[thp]
\centering
\subfloat[$W(\mathbf{x})=|\mathbf{x}|^2/2-\ln |\mathbf{x}|$]{\includegraphics[totalheight=0.29\textheight]{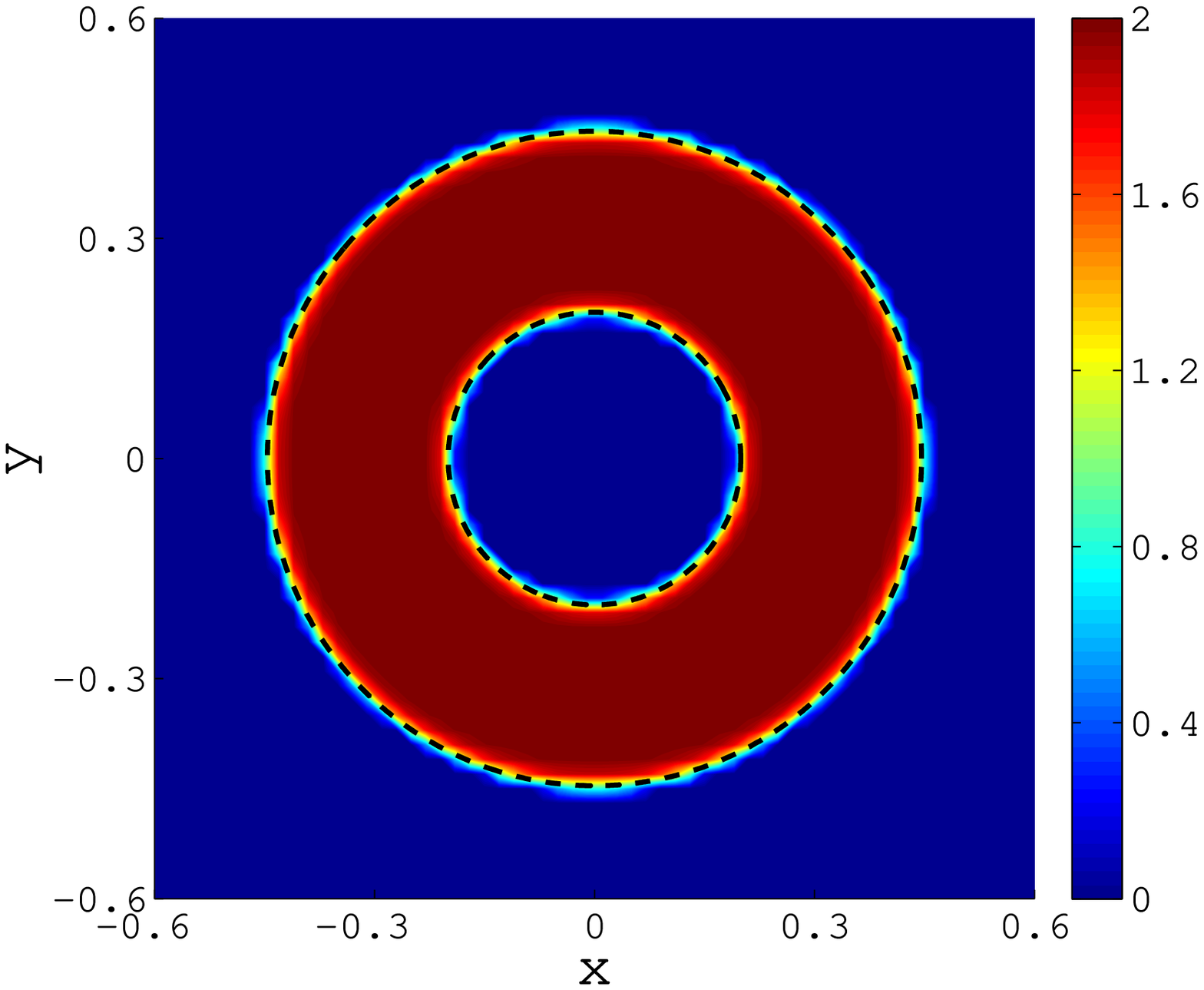}}
$~~$
\subfloat[$W(\mathbf{x})=\lambda\big(V(|\mathbf{x}|)-CV(|\mathbf{x}|/\ell)\big)$]{\includegraphics[totalheight=0.29\textheight]{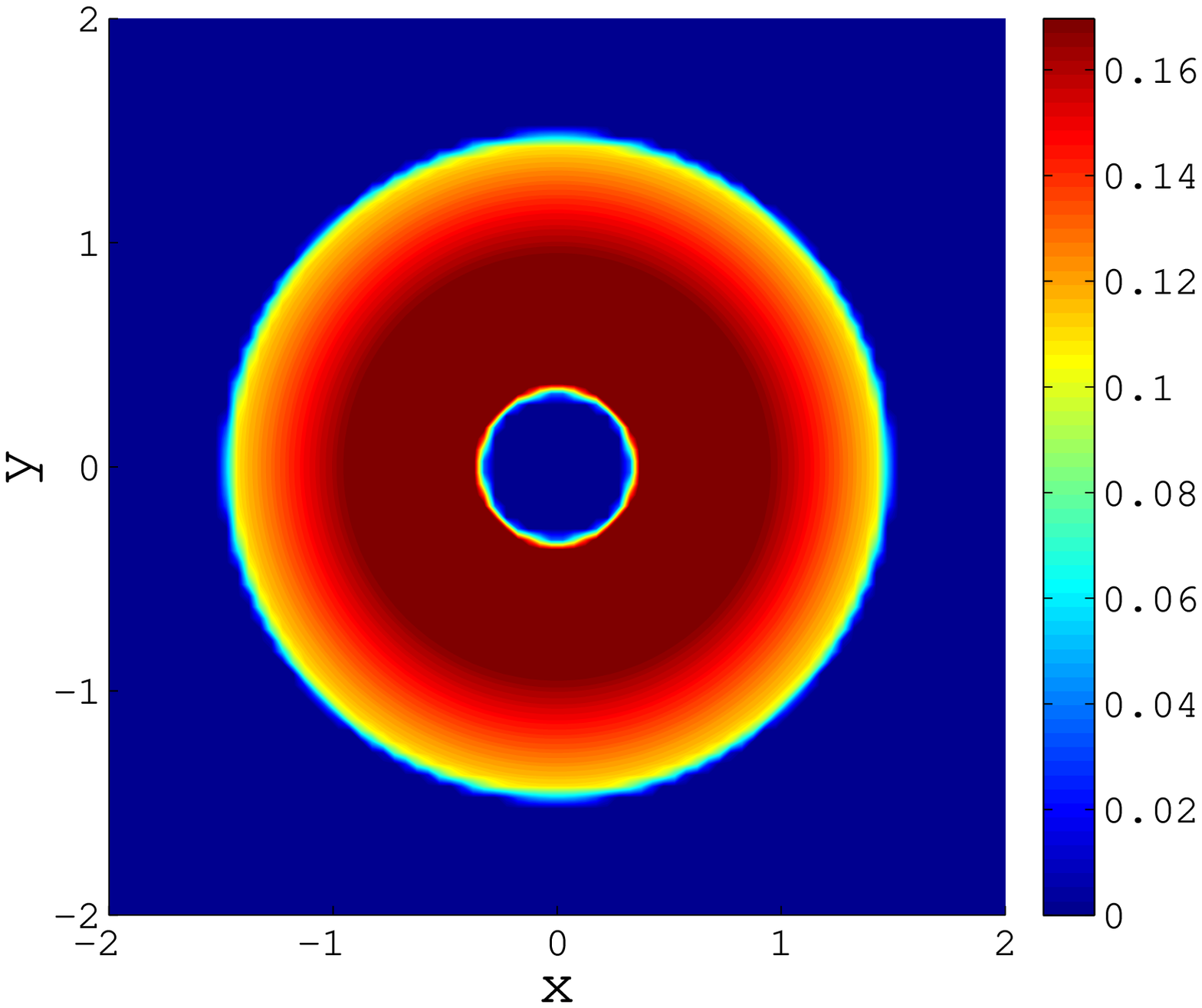}}
\caption{The steady density $\rho_\infty$ for the rotating mill with $\Delta x=\Delta y=0.05$. {\bf (a)} $\alpha = 0.25$, $\beta=2\pi$; {\bf (b)} $V(r) = -K_0(kr)/2\pi$, where $K_0(r)$ is the modified Bessel function of the second kind and the parameters $C = 10/9, \ell=0.75, k =0.5, \lambda=100, \alpha = 1.0, \beta=40$ are taken from\cite{Carrillo2013112}.}
\label{fig:mill}
\end{figure}
\end{example}


\begin{acknowledgments}
JAC acknowledges support from projects MTM2011-27739-C04-02,
2009-SGR-345 from Ag\`encia de Gesti\'o d'Ajuts Universitaris i de Recerca-Generalitat de Catalunya, and the Royal Society through a Wolfson Research Merit Award. JAC and YH were supported by Engineering and Physical Sciences Research Council (UK) grant number EP/K008404/1. The work of AC was supported in part by the NSF Grant DMS-1115682. The authors also acknowledge the support by NSF RNMS grant DMS-1107444. 
\end{acknowledgments}

\end{document}